\newcommand{\F}{\mathcal{F}}
\newcommand{\Ftwo}{\mathcal{F}_{Z,NL}}
\newcommand{\Nme}{N^{m,e}}
\newcommand{\NN}{\mathcal{N}}
\newcommand{\vv}{\mathbf{v}}
\newcommand{\FF}{\mathfrak{F}}
\newcommand{\ww}{\mathfrak{w}}
\title{Nonlinear identifiability of directed acyclic graphs \linebreak with partial excitation and measurement}
\author{Renato Vizuete and Julien M. Hendrickx
\thanks{*This work was supported by F.R.S.-FNRS via the \emph{KORNET} project and via the Incentive Grant for Scientific Research (MIS) \emph{Learning from Pairwise Comparisons}, and by the \emph{RevealFlight} Concerted Research Action (ARC) of the Fédération Wallonie-Bruxelles.}
\thanks{R.~Vizuete and J.~M.~Hendrickx are with ICTEAM institute, UCLouvain, B-1348, Louvain-la-Neuve, Belgium. R.~Vizuete is a FNRS Postdoctoral Researcher - CR.
{\tt\small renato.vizueteharo@uclouvain.be},
{\tt\small julien.hendrickx@uclouvain.be}\protect.}}
\newcommand{\vertiii}[1]{{\left\vert\kern-0.25ex\left\vert\kern-0.25ex\left\vert #1 
    \right\vert\kern-0.25ex\right\vert\kern-0.25ex\right\vert}}
\newtheorem{definition}{Definition}
\newtheorem{theorem}{Theorem}
\newtheorem{proposition}{Proposition}
\newtheorem{lemma}{Lemma}
\newtheorem{remark}{Remark}
\newtheorem{assumption}{Assumption}
\newtheorem{example}{Example}
\newcommand{\abs}[1]{\left|#1\right|}
\newcommand{\R}{\mathbb R}
\begin{document}

\maketitle
\thispagestyle{empty}

%%%%%%%%%%%%%%%%%%%%%%%%%%%%%%%%%%%%%%%%%%%%%%%%%%%%%%%%%%%%%%%%%%%%%%%%%%%%%%%%
\begin{abstract}
We analyze the identifiability of directed acyclic graphs in the case of partial excitation and measurement. We consider an additive model where the nonlinear functions located in the edges depend only on a past input, and we analyze the identifiability problem in the class of pure nonlinear functions satisfying $f(0)=0$. We show that any identification pattern (set of measured nodes and set of excited nodes) requires the excitation of sources, measurement of sinks and the excitation or measurement of the other nodes. Then, we show that a directed acyclic graph (DAG) is identifiable with a given identification pattern if and only if it is identifiable with the measurement of all the nodes. Next, we analyze the case of trees where we prove that any identification pattern guarantees the identifiability of the network. Finally, by introducing the notion of a generic nonlinear network matrix, we provide sufficient conditions for the identifiability of DAGs based on the notion of vertex-disjoint paths.
\end{abstract}

%%%%%%%%%%%%%%%%%%%%%%%%%%%%%%%%%%%%%%%%%%%%%%%%%%%%%%%%%%%%%%%%%%%%%%%%%%%%%%%%
\section{Introduction}

Systems composed by single entities or units that interact in a network to generate a more complex global behavior are ubiquitous \cite{boccaletti2006complex,bullo2022lectures}. Many of these interactions are characterized by dynamics located at the level of edges, and the identification of these dynamics is essential to study the evolution, analyze the stability, and design control actions. Since the size of the networks can be really large and the design of experiments generally involves an economic cost, it is important to derive identifiability conditions that specify which nodes must be excited and measured to guarantee the identification of the network.
While the identifiability of linear interactions modeled by transfer functions have been extensively analyzed in recent years \cite{weerts2018identifiability,hendrickx2019identifiability,vanwaarde2020necessary}, few works have focused in nonlinear networks due to the complexity of the dynamics. The case of nonlinear identifiability where all the nodes can be excited (full excitation) has been analyzed in \cite{vizuete2023nonlinear,vizuete2024nonlinear}, where surprisingly, the identifiability conditions are weaker than in the linear case when pure nonlinear functions are considered. These differences between the linear and nonlinear cases can be explained by the consequences of the superposition principle that allows the mixing of certain information only in the linear case.

In many scenarios, due to physical limitations, some nodes might not be available to be excited, or the cost associated with the excitation of a node (actuator) might be greater than the cost associated with the measurement of a node (sensor). For instance, in an electrical network, the excitation of a node might require a power source while the measurement of a node might require only a sensor. 
Therefore, it is important to relax the full excitation assumption and guarantee at the same time the identifiability of all the network for more complex identification patterns (combination of excited nodes and measured nodes).

The identifiability of linear networks in the case of partial excitation and measurement has been well analyzed in several works \cite{bazanella2019network,legat2020local,cheng2023necessary}, where the symmetry excitation/measurement plays an important role. Nevertheless, even in the linear case, many problems are still open such as the minimum number of nodes that must be excited and measured, a characterization of the identification patterns that guarantee identifiability of the network, among others.

In this work, we analyze the identifiability of nonlinear networks in the case of partial excitation and measurement. We consider a model where the dynamics depend only on a past input, and we restrict the analysis to the case of directed acyclic graphs (DAGs) and pure nonlinear surjective functions. 
We show that any identification pattern requires the excitation of sources, the measurement of sinks, and the excitation or measurement of the other nodes of the network. Then, we show that a DAG is identifiable with an identification pattern if and only if it is identifiable with the measurement of all the nodes, which is different from the linear case. In the case of trees, we show that there is a symmetry between the identifiability conditions for the full excitation and full measurement cases and that any identification pattern guarantees identifiability. Unlike the linear case, for more general DAGs, we show that the symmetry does not hold, and we derive sufficient conditions for identifiability based on the notion of vertex-disjoint paths and a generic nonlinear network matrix.

\section{Problem formulation}

\subsection{Model class}

We consider a network characterized by a weakly connected digraph $G=(V,E)$ composed by a set of nodes $V=\{ 1,\ldots,n\}$ and a set of edges $E\subseteq V\times V$. The output of each node $i$ in the network is given by:
\begin{equation}\label{eq:nonlinear_model}
y_i^k=\sum_{j\in \mathcal{N}_i}f_{i,j}(y_j^{k-{m_{i,j}}})+u_i^{k-1}, \;\; \text{for all  } i\in V,  
\end{equation}
where the superscripts of the inputs and outputs  denote the corresponding values at the specific time instants, the delay $m_{i,j}\in\mathbb{Z}^+$ is finite, $f_{i,j}$ is a nonlinear function, $\mathcal{N}_i$ is the set of in-neighbors of node $i$, and $u_i$ is an arbitrary external excitation signal. If a node is not excited, its corresponding excitation signal is set to zero. Fig.~\ref{fig:model} presents an illustration of the model considered in this work. 
Our goal is to analyze a general nonlinear dynamics, but in this preliminary work, we will focus on the effect of the nonlinearities. The model  \eqref{eq:nonlinear_model} corresponds to a generalized version of the nonlinear static model in \cite{vizuete2023nonlinear}, where $m_{i,j}$ can take a value different from 1. 

The nonzero functions $f_{i,j}$ between the nodes define the topology of the network $G$, forming the set of edges $E$. We do not consider multiple edges between two nodes since they would be indistinguishable and hence unidentifiable.

\begin{assumption}\label{ass:full_excitation}
    The topology of the network is known, where the presence of an edge implies a nonzero function.
\end{assumption}

Assumption~\ref{ass:full_excitation} implies that we know which nodes are connected by nonzero functions. 
The objective of this work is to determine which nodes need to be excited and/or measured to identify all the nonlinear functions in the network. Our aim is to determine the possibility of identification and not to develop an algorithm or verify the accuracy of other identification methods.

Similarly to \cite{hendrickx2019identifiability,vizuete2023nonlinear}, for the identification process we assume that the relations between the signals of excited nodes and the outputs of measured nodes have been perfectly identified. In addition, we restrict our attention to networks that do not contain any cycle (i.e., directed acyclic graphs). This implies that when we measure a node $i$, we obtain an identification of the function $F_i$:
\begin{multline}\label{eq:function_Fi}
  \!\!\!\!\!  y_i^k=u_i^{k-1}+F_i(u_1^{k-2},\ldots,u_1^{k-M_1},\ldots,u_{n_i}^{k-2},\ldots,u_{n_i}^{k-M_{n_i}}),\\
    1,\dots,n_i\in \mathcal{N}^{e\to i},
\end{multline}
where $\mathcal{N}^{e\to i}$ is the set of excited nodes with a path to the node $i$. The function $F_i$ determines the output of the node $i$ based on the excitation signals and only depends on a finite number of inputs (i.e., $M_1,\ldots,M_{n_i}$ are finite) due to the finite delays $m_{i,j}$ and the absence of cycles.
With a slight abuse of notation, we use the superscript in the function $F_i^{(s)}$ to denote that all the inputs in \eqref{eq:function_Fi} are delayed by $s$:

\vspace{-3mm}

\small
$$
F_i^{(s)}=F_i(u_1^{k-2-s},\ldots,u_1^{k-M_1-s},\ldots,u_{n_i}^{k-2-s},\ldots,u_{n_i}^{k-M_{n_i}-s}).
$$

\normalsize

\begin{figure}[!t]
    \centering
    \begin{tikzpicture}
    [fill fraction/.style n args={2}{path picture={
            \fill[#1] (path picture bounding box.south west) rectangle
            ($(path picture bounding box.north west)!#2!(path picture bounding box.north east)$);}},
roundnodes/.style={circle, draw=black!60, fill=black!5, very thick, minimum size=1mm},roundnode/.style={circle, draw=white!60, fill=white!5, very thick, minimum size=1mm},roundnodes2/.style={circle, draw=black!60, fill=black!30, very thick, minimum size=1mm},roundnodes3/.style={circle, draw=black!60, fill=white!30, very thick, minimum size=1mm},roundnodes4/.style={circle, draw=black!60, fill=black!30, fill fraction={white!30}{0.5}, very thick, minimum size=1mm}
]

\small

\node[roundnodes3](node1){1};
\node[roundnodes2](node2)[above=of node1,yshift=3mm]{2};
\node[roundnodes2](node3)[right=1.8cm of node1]{3};
\node[roundnodes4](node4)[right=1.8cm of node2]{4};
\node[roundnodes3](node5)[right=1.8cm of node3]{5};
\node[roundnodes4](node6)[right=1.8cm of node4]{6};
\node[roundnode](u1)[above=of node1,yshift=-7mm,xshift=-8mm]{$u_1$};
\node[roundnode](u4)[above=of node4,yshift=-7mm,xshift=-8mm]{$u_4$};
\node[roundnode](u6)[above=of node6,yshift=-7mm,xshift=8mm]{$u_6$};
\node[roundnode](u5)[above=of node5,yshift=-7mm,xshift=8mm]{$u_5$};

\draw[-{Classical TikZ Rightarrow[length=1.5mm]}] (node1) to node [right,swap,yshift=0mm] {$f_{2,1}$} (node2);
\draw[-{Classical TikZ Rightarrow[length=1.5mm]}] (node2) to node [above,swap,yshift=0mm] {$f_{4,2}$} (node4);
\draw[-{Classical TikZ Rightarrow[length=1.5mm]}] (node1) to node [below,swap,yshift=0mm] {$f_{3,1}$} (node3);
\draw[-{Classical TikZ Rightarrow[length=1.5mm]}] (node2) to node [right,swap,yshift=2mm] {$f_{3,2}$} (node3);
\draw[-{Classical TikZ Rightarrow[length=1.5mm]}] (node4) to node [above,swap,yshift=0mm] {$f_{6,4}$} (node6);
\draw[-{Classical TikZ Rightarrow[length=1.5mm]}] (node3) to node [below,swap,yshift=0mm] {$f_{5,3}$} (node5);
\draw[-{Classical TikZ Rightarrow[length=1.5mm]}] (node5) to node [left,swap,yshift=0mm] {$f_{6,5}$} (node6);
\draw[-{Classical TikZ Rightarrow[length=1.5mm]}] (node3) to node [left,swap,yshift=2mm] {$f_{6,3}$} (node6);
\draw[gray,dashed,-{Classical TikZ Rightarrow[length=1.5mm]}] (u1) -- (node1);
\draw[gray,dashed,-{Classical TikZ Rightarrow[length=1.5mm]}] (u6) -- (node6);
\draw[gray,dashed,-{Classical TikZ Rightarrow[length=1.5mm]}] (u4) -- (node4);
\draw[gray,dashed,-{Classical TikZ Rightarrow[length=1.5mm]}] (u5) -- (node5);

\normalsize

\end{tikzpicture}

\vspace{-2mm}
    
    \caption{Model of a network considered for the identification where some nodes are excited (white) and some nodes are measured (gray). Other nodes are excited and measured at the same time (white and gray).}
    \vspace{-4mm}
    \label{fig:model}
\end{figure}

\subsection{Identifiability}

\begin{definition}[Identification pattern]
    We define an identification pattern as a pair $(\NN^e,\NN^m)$ where $\NN^e\subseteq V$ is the set of excited nodes and $\NN^m\subseteq V$ is the set of measured nodes.
\end{definition}

\begin{definition}
    We define the number of identification actions as $N^{m,e}=\abs{\NN^e}+\abs{\NN^m}$.
\end{definition}

Clearly, if a node $i$ is excited and measured at the same time (i.e., $i\in\NN^e$ and $i\in\NN^m$), this increases $N^{m,e}$ by 2.

Next, we define the relationships between the measurements of the nodes and the functions $f_{i,j}$.

\begin{definition}[Set of measured functions]\label{def:set_measured_functions}
    Given a set of measured nodes $\mathcal{N}^m$, the set of measured functions $F(\mathcal{N}^m)$ associated with $\mathcal{N}^m$ is given by:
    $$
    F(\mathcal{N}^m):=\{F_i\;|\;i\in \mathcal{N}^m\}.
    $$
\end{definition}

We say that a function $f_{i,j}$ associated with an edge satisfies $F(\mathcal{N}^m)$ if $f_{i,j}$ can lead to $F(\mathcal{N}^m)$ through \eqref{eq:nonlinear_model}.

Since the identifiability problem can be hard or unrealistic for general functions, we restrict the problem to  a certain class of functions $\F$: the functions associated with the edges belong to $\F$ and that the identifiability is considered only among the functions belonging to $\F$. 

\begin{definition}[Identifiability]\label{def:math_identifiability}
    Given a set of functions $\{ f \}=\{f_{i,j}\in \F\;|\;(i,j)\in E\}$ that generates $F(\NN^m)$ and another set of functions $\{ \tilde f \}=\{\tilde f_{i,j}\in\F\;|\;(i,j)\in E\}$ that generates $\tilde F(\NN^m)$. An edge $f_{i,j}$ is identifiable in a class $\F$ if $
    F(\NN^m)=\tilde F(\NN^m)
    $, implies that $f_{i,j}=\tilde f_{i,j}$.
    A network $G$ is identifiable in a class $\F$ if 
    $
    F(\NN^m)=\tilde F(\NN^m),
    $
    implies that
    $
    \{ f \}=\{ \tilde f \}.
    $
\end{definition}

In this work, we will consider analytic functions with a Taylor series that converges to the function for all $x\in\R$ (possibly with a finite radius of convergence).
In the full excitation case \cite{vizuete2023nonlinear,vizuete2024nonlinear}, it has been proved that a static component of a function could make the identifiability problem unsolvable. Also, when linear functions are allowed in a class $\F$, the identifiability conditions are similar to the linear case. However, it has been observed that the presence of nonlinearities change the identifiability conditions \cite{vizuete2023nonlinear,vizuete2024nonlinear}. For this reason, we will focus on the class of pure nonlinear functions \cite{vizuete2023nonlinear}.

\begin{definition}[Class of functions $\Ftwo$]\label{def:class_functions}
Let $\Ftwo$ be the class of functions $f:\R\to\R$ with the following properties:
\begin{enumerate}
    \item $f$ is analytic in $\R$.
    \item $f(0)=0$.
    \item The associated Taylor series $f(x)=\sum_{n=1}^\infty a_nx^n$ contains at least one coefficient $a_n\neq 0$ with $n>1$. 
    \item The range of $f$ is $\R$.
\end{enumerate}
\end{definition}

Therefore, in the rest of this work we will assume that $f_{i,j}\in\Ftwo$.

\section{Necessary conditions for identifiability}

Since the sources and sinks play an important role in directed acyclic graphs, we first provide necessary conditions that these nodes must satisfy to guarantee identifiability.

\begin{lemma}[Sinks and sources]\label{lemma:sinks_sources}
For identifiability of a DAG, it is necessary to measure all the sinks and excite all the sources.
\end{lemma}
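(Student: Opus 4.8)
The plan is to establish the two claims separately by contradiction, exploiting the structure of the model \eqref{eq:nonlinear_model} together with the requirement that every $f_{i,j}$ lies in $\Ftwo$ (in particular, $f_{i,j}(0)=0$ and $f_{i,j}$ is a pure nonlinearity).

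First, suppose a source $s$ (a node with no in-neighbors, $\mathcal{N}_s=\emptyset$) is not excited. Then $u_s^k=0$ for all $k$, so by \eqref{eq:nonlinear_model} the output $y_s^k$ is identically zero. Consequently, $y_s$ carries no information into the rest of the network: for any out-neighbor $i$ of $s$, the term $f_{s,i}(y_s^{k-m_{i,s}})=f_{s,i}(0)=0$ regardless of which function $f_{s,i}\in\Ftwo$ we choose. Hence the measured functions $F(\NN^m)$ are unchanged if we replace $f_{s,i}$ by any other $\tilde f_{s,i}\in\Ftwo$ (the class is nonempty and contains many distinct elements, e.g. $x\mapsto x+x^2$ scaled appropriately), while $f_{s,i}\neq\tilde f_{s,i}$. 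By Definition~\ref{def:math_identifiability}, the edge $(s,i)$ — and hence the network — is not identifiable. This proves that all sources must be excited.

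Second, suppose a sink $t$ (a node with no out-neighbors) is not measured, and let $j$ be any in-neighbor of $t$, so $(j,t)\in E$. The key observation is that $y_t$ influences only the output $y_t$ itself (since $t$ has no successors), and $y_t$ is not measured. Therefore the value of $f_{t,j}$ — indeed of every function $f_{t,\ell}$ on an edge into $t$ — has no effect whatsoever on any measured function $F_i$ with $i\in\NN^m$. We can again replace $f_{t,j}$ by a different $\tilde f_{t,j}\in\Ftwo$ without changing $F(\NN^m)$, contradicting identifiability of the edge $(j,t)$. This shows all sinks must be measured. Combining the two arguments gives the lemma.

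The routine care needed is in the second step: one must check that such an edge $(j,t)$ exists, i.e. that the sink $t$ has at least one in-neighbor — this holds because $G$ is weakly connected with $n\ge 2$ (an isolated node is both a source and a sink and the statement is vacuous or handled trivially). The only mild subtlety, and the point I would be most careful about, is the direction of the delay convention in \eqref{eq:nonlinear_model}: there $f_{i,j}$ sits on the edge $j\to i$ and takes $y_j$ as argument, so a "sink" $t$ (no outgoing edges) is a node that appears as the first index $i=t$ of functions $f_{t,j}$ but never as the second index — i.e. $y_t$ never feeds any other node — which is exactly what makes the substitution invisible. No genuine obstacle arises; the argument is essentially the observation that unexcited sources produce the zero signal and unmeasured sinks absorb information without releasing it, both incompatible with $\Ftwo$ being a nontrivial class.
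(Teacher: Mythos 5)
Your argument is correct and takes essentially the same route as the paper: the source half is the identical observation that an unexcited source outputs zero and $f(0)=0$ for every $f\in\Ftwo$ renders its outgoing edges invisible, while for the sink half the paper simply cites Proposition~1 of \cite{vizuete2023nonlinear}, whose content is exactly the direct substitution argument you supply. The only blemish is a notational slip in the source step --- the function on the edge from $s$ to its out-neighbor $i$ should be written $f_{i,s}$, not $f_{s,i}$, per the convention you yourself state correctly at the end --- which does not affect the substance.
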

\begin{proof}
    The necessity of the measurement of all the sinks has been proved in \cite[Proposition~1]{vizuete2023nonlinear}. For the necessity of the excitation of the sources, let us consider an arbitrary source $i$. 
    The measurement of an out-neighbor $j$ of the source $i$ provides the output:
    \begin{align}
    y_j^k&=\sum_{\ell\in\NN_j} f_{j,\ell}(y_\ell^{k-m_{j,\ell}})+u_j^{k-1}\nonumber\\
    &=f_{j,i}(u_i^{k-m_{j,i}-1})+\!\!\sum_{\ell\in\NN_j\setminus \{i\}} f_{j,\ell}(y_\ell^{k-m_{j,\ell}})+u_j^{k-1}.\label{eq:measurement_source}
    \end{align}
    Since the source $i$ is not excited, \eqref{eq:measurement_source} becomes:
    \begin{align}
    y_j^k&=f_{j,i}(0)+\sum_{\ell\in\NN_j\setminus \{i\}} f_{j,\ell}(y_\ell^{k-m_{j,\ell}})+u_j^{k-1}\nonumber\\
    &=\sum_{\ell\in\NN_j\setminus \{i\}} f_{j,\ell}(y_\ell^{k-m_{j,\ell}})+u_j^{k-1}.\label{eq:measurement_source_2}
    \end{align}
    Notice that any other function $\tilde f_{j,i}\in\Ftwo$ satisfies \eqref{eq:measurement_source_2}, which implies that it is not possible to identify the edge $f_{j,i}$. 
    Thus, the excitation of all the sources is necessary for identifiability of the network. 
\end{proof}

Next, we are interested in determining conditions that other nodes of the network must satisfy to guarantee identifiability.

\begin{proposition}\label{prop:excite_measure}
    For identifiability of a DAG, it is necessary to either excite or measure each node.
\end{proposition}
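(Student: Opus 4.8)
The plan is to argue by contradiction: suppose there is a node $i$ that is neither excited nor measured, and show that some edge touching $i$ cannot be identified. The key intuition is that a node which is unexcited and unmeasured can only be "seen" by the rest of the network through the functions on its outgoing edges, and this limited information is insufficient to separate the contributions coming through $i$ from reparametrizations of those outgoing functions.

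First I would set up the situation. Since $i$ is not a source (sources must be excited, by Lemma~\ref{lemma:sinks_sources}) and not a sink (sinks must be measured), node $i$ has at least one in-neighbor and at least one out-neighbor. Pick an out-neighbor $j$ of $i$, so that the edge function $f_{j,i}$ is in $E$. Because $i$ is not excited, the signal $y_i$ entering $f_{j,i}$ is itself a function of the upstream excitation signals only; write $y_i^{k} = g_i(\cdot)$ for the corresponding composite map (finite number of delayed inputs, well-defined since the graph is a DAG). The only place $f_{j,i}$ ever appears in any measured function $F_\ell$, $\ell \in \NN^m$, is through the composite $f_{j,i}\circ g_i$ (possibly further composed with downstream functions and delayed), because every directed path from $i$ to a measured node passes through $j$-type out-neighbors and never "re-enters" $i$ in an acyclic graph, and $i$ itself is not measured so $F_i$ is never available.

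Next I would construct an explicit alternative set of functions $\{\tilde f\}$ producing the same measured functions. The natural move: replace $f_{j,i}$ by $\tilde f_{j,i} = f_{j,i}\circ h$ for a suitable invertible analytic $h:\R\to\R$ with $h(0)=0$, and replace $g_i$'s constituent edges so that $\tilde g_i = h^{-1}\circ g_i$; concretely, if $p$ is an in-edge of $i$, one absorbs $h^{-1}$ into $f_{i,p}$ (and leaves everything else unchanged). Then $\tilde f_{j,i}\circ \tilde g_i = f_{j,i}\circ g_i$, so all downstream measured functions are unchanged, while $f_{j,i}\neq \tilde f_{j,i}$ for generic $h$. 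One must check that $\tilde f_{j,i}$ and the modified in-edge function stay in the class $\Ftwo$: surjectivity is preserved under composition with a surjective $h$, $f(0)=0$ is preserved, analyticity is preserved, and the "genuinely nonlinear" coefficient condition can be arranged by choosing $h$ close to the identity (or simply noting that composing a nonlinear surjective analytic map with an analytic near-identity map cannot make it linear). The subtlety is handling the case where $i$ has several in-neighbors or several out-neighbors — then $h^{-1}$ must be absorbable consistently; choosing to modify a single in-edge and a single out-edge while keeping the others fixed is the cleanest route, and one should double-check that no \emph{other} measured function distinguishes $f_{i,p}$ from $\tilde f_{i,p}$ (it cannot, since any path through $p$ reaching a measured node must pass through $i$, hence through the composite that is held invariant).

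The main obstacle I anticipate is this consistency of the reparametrization when node $i$ has in-degree or out-degree greater than one, and making sure the modified functions genuinely remain in $\Ftwo$ rather than accidentally becoming linear or losing surjectivity — essentially a careful bookkeeping of which composite maps the affected edge functions occur in, combined with a judicious choice of $h$ (e.g. $h(x) = x + \epsilon x^2$ or, more safely, an analytic diffeomorphism of $\R$ obtained by integrating a positive analytic function, so that surjectivity and invertibility are automatic). Once the alternative is exhibited with $F(\NN^m) = \tilde F(\NN^m)$ but $\{f\}\neq\{\tilde f\}$, Definition~\ref{def:math_identifiability} is violated and the proposition follows.
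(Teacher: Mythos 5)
Your overall strategy --- exhibit a reparametrization that leaves every measured $F_\ell$ invariant while changing the edge functions around the unexcited, unmeasured node $i$ --- is exactly the paper's strategy. But your concrete construction has a genuine gap at precisely the point you flag as a ``subtlety.'' The model is additive: $y_i^k=\sum_{\ell\in\NN_i}f_{i,\ell}(y_\ell^{k-m_{i,\ell}})$. If $i$ has two or more in-neighbors, a nonlinear $h^{-1}$ applied to this sum cannot be realized by modifying the summands: absorbing $h^{-1}$ into a single in-edge gives $h^{-1}(f_{i,p}(y_p))+f_{i,p'}(y_{p'})\neq h^{-1}\bigl(f_{i,p}(y_p)+f_{i,p'}(y_{p'})\bigr)$, and distributing it over all in-edges is impossible because $h^{-1}$ of a sum does not decompose as a sum of functions of the individual arguments unless $h$ is linear. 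Your proposed choices $h(x)=x+\epsilon x^2$ or a nonlinear analytic diffeomorphism therefore do not yield $\tilde F(\NN^m)=F(\NN^m)$ in general. The ``modify a single out-edge'' part fails for the same reason: once $y_i$ is replaced by $h^{-1}(y_i)$, \emph{every} out-edge $f_{j,i}$ must be pre-composed with $h$, not just one.

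The resolution --- and what the paper does --- is to take $h$ linear: set $\tilde f_{i,\ell}(x)=\gamma f_{i,\ell}(x)$ for \emph{all} $\ell\in\NN_i$ and $\tilde f_{j,i}(x)=f_{j,i}(x/\gamma)$ for \emph{all} out-neighbors $j$, with $\gamma\neq 0,1$. Linearity is what lets the scaling commute with the sum ($\tilde y_i=\gamma y_i$), and membership in $\Ftwo$ is then immediate (scaling preserves analyticity, $f(0)=0$, surjectivity, and the existence of a nonzero Taylor coefficient of order $>1$), so the checks you were worried about become trivial. Your nonlinear $h$ would only work for a node with a single in-neighbor and a single out-neighbor, which is not the general case the proposition requires.
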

\begin{proof}
    By Lemma~\ref{lemma:sinks_sources}, the excitation of sources and the measurement of sinks is necessary.
    Let us consider an arbitrary node $i$ in a network which is neither a source nor a sink. Let us assume that we can excite or measure the other nodes and all the edges have been identified except the incoming and outgoing edges of $i$. The measurement of an out-neighbor $j$ of the node $i$ is given by:

\vspace{-2mm}
    
    \small
    \begin{align}
    y_j^k&\!=\!u_j^{k-1}\!+\!f_{j,i}(y_i^{k-m_{j,i}})\!+\!\sum_{p\in\NN_j\setminus \{i\}} f_{j,p}(y_{p}^{k-m_{j,p}})\nonumber\\
    &=\!u_j^{k-1}\!+\!f_{j,i}(\sum_{\ell\in\NN_i}\! f_{i,\ell}(y_{\ell}^{k-m_{j,i}-m_{i,\ell}}))\!+\!\!\!\!\sum_{p\in\NN_j\setminus \{i\}} \!f_{j,p}(y_{p}^{k-m_{j,p}}).\label{eq:measurement_any_node}
    \end{align}
    \normalsize
    Notice that the same output \eqref{eq:measurement_any_node} can be obtained with the functions $\tilde f_{i,\ell}(x)=\gamma f_{i,\ell}(x)$ and $\tilde f_{j,i}(x)=f_{j,i}(\frac{x}{\gamma})$, with $\gamma\neq 0$, which implies that the incoming edges and outgoing edges of $i$ cannot be identified. Thus, all the nodes must be either excited or measured.
\end{proof}

Similarly to the linear case \cite{bazanella2019network}, Proposition~\ref{prop:excite_measure} shows that we need at least $N^{m,e}=n$ to guarantee the identifiability of DAGs in $\Ftwo$. Nevertheless, unlike the linear case, where $N^{m,e}>n$ could be necessary for some DAGs \cite{mapurunga2022excitation}, in the nonlinear case, we have a stronger result.

\begin{proposition}
There exists an identification pattern with $\Nme=n$ that guarantees the identifiability of a DAG.
\end{proposition}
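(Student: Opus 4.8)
The plan is to exhibit one concrete identification pattern $(\NN^e,\NN^m)$ with $\abs{\NN^e}+\abs{\NN^m}=n$ and to prove that it renders every DAG identifiable in $\Ftwo$. By Lemma~\ref{lemma:sinks_sources} and Proposition~\ref{prop:excite_measure} the pattern is essentially forced at the sources and sinks, so the natural candidate is: excite every source, measure every sink, and for each remaining node choose \emph{either} to excite \emph{or} to measure it — but not both. One clean choice that makes the induction go through is to \emph{excite all non-sink nodes and measure all sink nodes}; then $\NN^e=V\setminus(\text{sinks})$ and $\NN^m=(\text{sinks})$, these two sets partition $V$, and $\Nme=n$. (Symmetrically one could measure all non-source nodes and excite all sources; I would pick whichever orientation makes the bookkeeping lightest, and state the result for that one.)

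The core of the argument is an induction on a topological ordering of the DAG, showing that with this pattern every edge $f_{i,j}$ is identifiable. First I would record the key algebraic fact already visible in \eqref{eq:measurement_any_node}: if a node $i$ is \emph{excited}, then the signal $y_i^{k}$ contains the term $u_i^{k-1}$ with coefficient exactly $1$ and no rescaling ambiguity; feeding this clean copy of $u_i$ into a downstream measured node $j$ means the restriction of $F_j$ to variations in $u_i$ alone (with all other inputs held at $0$, using $f(0)=0$ for all edges so every other path contributes nothing) equals $f_{j,i}$ composed with the known upstream map. Because every non-source excited node also carries its own injected signal, we can probe each incoming edge of a measured node \emph{one excitation at a time}, which is precisely what kills the scaling degeneracy $\tilde f_{i,\ell}=\gamma f_{i,\ell}$, $\tilde f_{j,i}=f_{j,i}(\cdot/\gamma)$ exploited in the necessity proof. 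Then I would argue node by node up the topological order: for a measured node $j$ (a sink), each in-neighbor $i$ is excited, so setting all inputs to zero except $u_i$ isolates $f_{j,i}\big(F_i^{\text{known}}(\ldots)\big)$; since $F_i$ is already identified from earlier steps (it is a composition of previously identified edge functions, and $F_i$ itself is directly the identified input–output map if $i$ happens to be measured, or is reconstructed from the $f$'s along paths from excited ancestors otherwise), and since $F_i$ has full range $\R$ on the relevant slice because each $f\in\Ftwo$ is surjective, we recover $f_{j,i}$ uniquely. Edges into non-sink excited nodes are handled the same way by looking further downstream: any edge $f_{i,\ell}$ lies on some path to a sink, and pushing a single excitation through that path and reading it at the sink, after peeling off the already-identified downstream functions (each surjective, hence invertible enough to be peeled), pins down $f_{i,\ell}$.

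The main obstacle I anticipate is the "peeling" step: to isolate an edge deep in the graph one must invert the already-identified downstream composition, and the functions in $\Ftwo$ are surjective but not necessarily injective, so strictly speaking one inverts only locally or on a dense set. I would handle this by working with analytic functions: two analytic functions agreeing on a set with an accumulation point agree everywhere, so it suffices to identify $f_{j,i}$ on any interval, which the surjectivity of the upstream map $F_i$ guarantees we can reach. A secondary subtlety is making precise that $F_i$ — the map from excited ancestors to $y_i$ — is itself determined once all edges on paths from those ancestors to $i$ are known; this is immediate from the additive structure \eqref{eq:nonlinear_model} unrolled over the (acyclic, finite-depth) graph, but it should be stated as a small lemma or inline claim so the induction hypothesis is clean. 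With those two points dispatched, the induction closes and every edge, hence the whole network, is identifiable, establishing that the pattern with $\Nme=n$ works.
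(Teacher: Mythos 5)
Your choice of pattern is exactly the paper's: excite every non-sink and measure every sink, giving $\Nme=n$ with $\NN^e$ and $\NN^m$ partitioning $V$. The paper, however, disposes of the proposition in one line by citing the full-excitation result (Theorem~2 of \cite{vizuete2023nonlinear}); you instead attempt to re-prove that result, and it is in this re-proof that a genuine gap appears.

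The problematic step is the identification of an edge $f_{i,\ell}$ whose head $i$ is not a sink. Your procedure is to activate a single excitation, read the response at a downstream sink $s$, and ``peel off'' the already-identified downstream functions by inversion. But when there are several paths from $i$ (or from $\ell$) to $s$, what arrives at $s$ is a \emph{sum} of compositions along those paths, and a sum of compositions cannot be peeled by inverting its factors. The paper's own Example~\ref{ex:counterexample} (Fig.~\ref{fig:DAG_bridge}) is exactly this situation: with only $u_1$ active, $y_4=f_{4,2}(f_{2,1}(u_1^{k-T_{4,1}}))+f_{4,3}(f_{3,1}(u_1^{k-T_{4,1}}))$ collapses, for cubic edges, to the single coefficient $a_{4,2}a_{2,1}^3+a_{4,3}a_{3,1}^3$, so the one-input experiment at the sink does not pin down $f_{2,1}$ and $f_{3,1}$ even once $f_{4,2}$ and $f_{4,3}$ are known. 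What actually saves the full-excitation pattern on this graph is that node $2$ carries its own excitation: activating $u_1$ and $u_2$ simultaneously makes the downstream nonlinearity see the argument $u_2+f_{2,1}(u_1)$, and the cancellation result Lemma~\ref{lemma:auxiliary} --- not invertibility of the downstream map --- then forces $f_{2,1}=\tilde f_{2,1}$. Your sketch gestures at the injected signal ``killing the scaling degeneracy,'' but the concrete experiment you describe (one excitation at a time, then inversion) discards precisely the simultaneous excitation that this mechanism requires. A secondary inconsistency: peeling needs the \emph{downstream} edges identified first, which conflicts with your induction ``up the topological order''; the full-excitation proofs resolve this by working from the sinks inward and removing identified nodes (cf.\ Lemma~\ref{lemma:removal_node}). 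Since the statement you need is exactly the cited Theorem~2 of \cite{vizuete2023nonlinear}, the cleanest repair is to invoke it directly, as the paper does.
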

\begin{proof}
    It is a direct consequence of the case of full excitation where we can guarantee identifiability by measuring all the sinks and exciting all the other nodes \cite[Theorem 2]{vizuete2023nonlinear}.
\end{proof}

However, the full excitation case could be restrictive since in many situations, the excitation of all the nodes could not be possible, or the cost of exciting a node could be significantly higher than the cost of measuring a node. For this reason, we are interested in other identification patterns $(\NN^e,\NN^m)$ that guarantee identifiability of a DAG with only $\Nme=n$.

In the rest of this work, we will consider that in any identification pattern, all the sources are excited, all the sinks are measured, and the other nodes are either excited or measured.

\section{Full measurement equivalence}

In the next proposition, we will prove a link between any identification pattern and the full measurement case.

\begin{figure*}[!t]
    \centering

\vspace{2mm}
    
    \begin{tikzpicture}
    [fill fraction/.style n args={2}{path picture={
            \fill[#1] (path picture bounding box.south west) rectangle
            ($(path picture bounding box.north west)!#2!(path picture bounding box.north east)$);}},
roundnodes/.style={circle, draw=black!60, fill=black!5, very thick, minimum size=1mm},roundnode/.style={circle, draw=white!60, fill=white!5, very thick, minimum size=1mm},roundnodes2/.style={circle, draw=black!60, fill=black!30, very thick, minimum size=1mm},roundnodes3/.style={circle, draw=black!60, fill=white!30, very thick, minimum size=1mm},roundnodes4/.style={circle, draw=black!60, fill=black!30, fill fraction={white!30}{0.5}, very thick, minimum size=1mm}
]

\small

\node[roundnodes3](node1){1};
\node[roundnodes3](node2)[above=of node1,yshift=0mm]{2};
\node[roundnodes2](node3)[right=1.0cm of node1]{3};
\node[roundnodes3](node4)[right=1.0cm of node2]{4};
\node[roundnodes2](node5)[right=1.0cm of node3]{5};
\node[roundnodes3](node6)[right=1.0cm of node4]{6};
\node[roundnodes2](node7)[right=1.0cm of node5]{7};
\node[roundnodes3](node8)[right=1.0cm of node6]{8};

\node[roundnodes3](node9)[right=1.2cm of node8,yshift=-7.5mm]{1};
\node[roundnodes2](node10)[right=0.9cm of node9]{3} (8.1,0.15) node[above] {\footnotesize{$p$}};
\node[roundnodes3](node11)[right=0.9cm of node10]{2}(9.7,0.15) node[above] {\footnotesize{$\cdots$}};
\node[roundnodes3](node12)[right=0.9cm of node11]{4}(11.15,0.15) node[above] {\footnotesize{$q-2$}};
\node[roundnodes3](node13)[right=0.9cm of node12]{6}(12.65,0.15) node[above] {\footnotesize{$q-1$}};
\node[roundnodes2](node14)[right=0.9cm of node13]{5}(14.15,0.15) node[above] {\footnotesize{$q$}};
\node[roundnodes3](node15)[right=0.9cm of node14]{8};
\node[roundnodes2](node16)[right=0.9cm of node15]{7};

\draw[-{Classical TikZ Rightarrow[length=1.5mm]}] (node1) to node [right,swap,yshift=0mm] {} (node2);
\draw[-{Classical TikZ Rightarrow[length=1.5mm]}] (node2) to node [above,swap,yshift=0mm] {} (node4);
\draw[-{Classical TikZ Rightarrow[length=1.5mm]}] (node1) to node [below,swap,yshift=0mm] {} (node3);
\draw[-{Classical TikZ Rightarrow[length=1.5mm]}] (node4) to node [above,swap,yshift=0mm] {} (node6);
\draw[-{Classical TikZ Rightarrow[length=1.5mm]}] (node3) to node [below,swap,yshift=0mm] {} (node5);
\draw[-{Classical TikZ Rightarrow[length=1.5mm]}] (node6) to node [left,swap,yshift=0mm] {} (node5);
\draw[-{Classical TikZ Rightarrow[length=1.5mm]}] (node3) to node [left,swap,yshift=0mm] {} (node4);
\draw[-{Classical TikZ Rightarrow[length=1.5mm]}] (node6) to node [left,swap,yshift=0mm] {} (node8);
\draw[-{Classical TikZ Rightarrow[length=1.5mm]}] (node5) to node [left,swap,yshift=0mm] {} (node7);
\draw[-{Classical TikZ Rightarrow[length=1.5mm]}] (node8) to node [left,swap,yshift=0mm] {} (node7);
\draw[-{Classical TikZ Rightarrow[length=1.5mm]}] (node6) to node [left,swap,yshift=0mm] {} (node7);
\draw[-{Classical TikZ Rightarrow[length=1.5mm]}] (node3) to node [left,swap,yshift=0mm] {} (node2);

\draw[-{Classical TikZ Rightarrow[length=1.5mm]}] (node9) to node [left,swap,yshift=0mm] {} (node10);
\draw[-{Classical TikZ Rightarrow[length=1.5mm]}] (node11) to node [left,swap,yshift=0mm] {} (node12);
\draw[-{Classical TikZ Rightarrow[length=1.5mm]}] (node12) to node [left,swap,yshift=0mm] {} (node13);
\draw[-{Classical TikZ Rightarrow[length=1.5mm]}] (node13) to node [left,swap,yshift=0mm] {} (node14);
\draw[-{Classical TikZ Rightarrow[length=1.5mm]}] (node15) to node [left,swap,yshift=0mm] {} (node16);
\draw[-{Classical TikZ Rightarrow[length=1.5mm]}] (node10) to node [left,swap,yshift=0mm] {} (node11);

\draw[-{Classical TikZ Rightarrow[length=1.5mm]}] (node9) to[out=45, in=135, looseness=1] (node11);
\draw[-{Classical TikZ Rightarrow[length=1.5mm]}] (node10) to[out=45, in=135, looseness=1] (node12);
\draw[-{Classical TikZ Rightarrow[length=1.5mm]}] (node13) to[out=45, in=135, looseness=1] (node15);
\draw[-{Classical TikZ Rightarrow[length=1.5mm]}] (node14) to[out=45, in=135, looseness=1] (node16);
\draw[-{Classical TikZ Rightarrow[length=1.5mm]}] (node10) to[out=325, in=215, looseness=1] (node14);
\draw[-{Classical TikZ Rightarrow[length=1.5mm]}] (node13) to[out=315, in=225, looseness=1] (node16);

\normalsize

\end{tikzpicture}

\vspace{-3mm}
    
    \caption{DAG with a topological ordering where between two consecutive measured nodes $p$ and $q$, all the excited nodes have a path to $q$. 
    }
    \vspace{-4mm}
    \label{fig:topological_order}
\end{figure*}

\begin{proposition}[Full measurement information]\label{prop:full_measurement_equivalence}
A DAG is identifiable in the class $\Ftwo$ with the identification pattern $(\NN^e,\NN^m)$ if and only if it is identifiable with the identification pattern $(\NN^e,V)$.
\end{proposition}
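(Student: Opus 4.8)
The statement is an equivalence, and one direction is immediate: if $G$ is identifiable in $\Ftwo$ with $(\NN^e,\NN^m)$ and two realizations $\{f\},\{\tilde f\}\subseteq\Ftwo$ satisfy $F(V)=\tilde F(V)$, then, since $\NN^m\subseteq V$, also $F(\NN^m)=\tilde F(\NN^m)$, hence $\{f\}=\{\tilde f\}$; so $G$ is identifiable with $(\NN^e,V)$. For the converse I would isolate the following ``promotion'' claim, proved for \emph{any} $\{f\},\{\tilde f\}\subseteq\Ftwo$ under the standing assumption on $(\NN^e,\NN^m)$ (sources excited, sinks measured, every node excited or measured): $F(\NN^m)=\tilde F(\NN^m)$ already forces $F_i=\tilde F_i$ for every $i\in V$. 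Granting this, if $G$ is identifiable with $(\NN^e,V)$ and $F(\NN^m)=\tilde F(\NN^m)$, then $F(V)=\tilde F(V)$, hence $\{f\}=\{\tilde f\}$, i.e.\ $G$ is identifiable with $(\NN^e,\NN^m)$.

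To prove the promotion claim I would work with the topological ordering of Fig.~\ref{fig:topological_order}, constructed recursively: choose a measured node $q_1$ with no measured ancestor (one exists---starting from any sink, pass to a measured ancestor whenever one exists; the process terminates---and, being an ancestor-node, every node of $\mathrm{anc}(q_1)\cup\{q_1\}$ has a path to $q_1$); place $\mathrm{anc}(q_1)\cup\{q_1\}$ first in any topological order of that ancestor-closed set with $q_1$ last; iterate on the remaining descendant-closed node set. This splits $V$ into consecutive blocks $B_1,\dots,B_s$ where $B_t$ ends with a measured node $q_t$, contains no other measured node, and every node of $B_t$ has a path to $q_t$ lying inside $B_t$. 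I would then induct on $t$: assuming $F_j=\tilde F_j$ on $B_1\cup\dots\cup B_{t-1}$, deduce $F_i=\tilde F_i$ for all $i\in B_t$ by a secondary induction over $B_t$ in topological order. A measured $i$ is covered by hypothesis; for an unmeasured (hence excited) $i\in B_t\setminus\{q_t\}$ the argument rests on three facts: (i) every in-neighbour of $i$ lies in $B_t$ or in an earlier block and is therefore already treated, so the signals feeding $i$ are common to both systems; (ii) $f(0)=0$ lets us switch off every excitation that does not reach $i$, so that the known relation $F_{q_t}=\tilde F_{q_t}$---which every node of $B_t$ feeds into---collapses, in that sub-experiment, to an identity comparing how the signal channeled through $i$ propagates down to $q_t$; and (iii) the range of every edge function being $\R$ removes the only gauge freedom, since a non-affine analytic function on $\R$ with full range admits only a discrete set of shifts $\delta$ making $f(\cdot+\delta)-f(\cdot)$ constant, whereas the candidate shift here depends analytically on the excitation and vanishes at zero, hence is identically zero; this yields $F_i=\tilde F_i$.

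The main obstacle is step (ii): because transitive (``shortcut'') edges are allowed, exciting the ancestors of $i$ may also disturb other nodes of $B_t$ that are not yet processed and that also feed $q_t$, so a naive choice of excitation does not cleanly isolate the contribution of $i$ inside $F_{q_t}$. Making this isolation rigorous requires a careful design of the probing experiments---for instance enlarging the probed region to all descendants of $i$ and peeling the edge functions off in topological order before reconstructing $F_i$, so that every signal entering the probed region from outside is already known to agree in the two systems---or, equivalently, a finer inner induction on the internal structure of $B_t$. Verifying the special ordering and carrying out this isolation are where essentially all the effort lies; the rest is bookkeeping plus the two structural properties $f(0)=0$ and $\mathrm{range}(f)=\R$ of the class $\Ftwo$.
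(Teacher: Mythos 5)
Your skeleton matches the paper's: the easy direction is handled identically, and the reduction of the converse to a ``promotion claim'' ($F(\NN^m)=\tilde F(\NN^m)$ forces $F_i=\tilde F_i$ for every $i\in V$) together with the block-structured topological ordering (each block ending in a measured node $q$ to which every node of the block has a path) is exactly the paper's strategy. But the core of the argument --- the inner induction that actually establishes $F_i=\tilde F_i$ for the unmeasured nodes of a block --- is not carried out, and you concede as much: your step (ii) is precisely where the proof lives, and ``a careful design of the probing experiments'' is not supplied. Your step (iii) is also not a proof as stated; the ``only a discrete set of shifts'' heuristic is replaced in the paper by a precise composition lemma (Lemma~\ref{lemma:auxiliary}: if $f$ is analytic, nonlinear, $f(0)=g(0)=\tilde g(0)=0$ and $f(x+g(y))=f(x+\tilde g(y))+h$ with $h$ independent of $x$, then $g=\tilde g$), which is what converts knowledge at $q$ into knowledge at its predecessor.

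The missing idea is to process the block in \emph{reverse} topological order rather than forward. The node $q-1$ immediately preceding $q$ has exactly one path to $q$, namely the direct edge, because any other path would need an intermediate node strictly between $q-1$ and $q$ in the ordering and none exists; so the shortcut-edge interference you worry about cannot occur for \emph{that} node. Zeroing every excitation except $u_{q-1}$ (legitimate since $f(0)=0$ kills all other contributions) reduces $F_q=\tilde F_q$ to $f_{q,q-1}(u_{q-1})=\tilde f_{q,q-1}(u_{q-1})$ for all $u_{q-1}\in\R$, identifying that edge; Lemma~\ref{lemma:auxiliary} applied to $f_{q,q-1}\bigl(u_{q-1}+F_{q-1}\bigr)=f_{q,q-1}\bigl(u_{q-1}+\tilde F_{q-1}\bigr)+h$ then yields $F_{q-1}=\tilde F_{q-1}$, i.e.\ the information of a (virtual) measurement of $q-1$. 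Finally Lemma~\ref{lemma:removal_node} lets one delete $q-1$ and pass to the induced subgraph $G_{V\setminus\{q-1\}}$, in which $q-2$ now has a unique path to $q$, and the argument iterates backwards through the block. This reverse peeling is exactly the isolation mechanism you identified as necessary but did not construct, so the proposal as written has a genuine gap at its central step.
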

Before presenting the proof of Proposition~\ref{prop:full_measurement_equivalence}, we introduce two lemmas.

\begin{lemma}[Lemma 4 \cite{vizuete2024nonlinear}]\label{lemma:removal_node}
    In the full excitation case, given a DAG $G$ and a node $j$ with only one path to its out-neighbor $i$. Let us assume that $j$ has been measured and the edge $f_{i,j}$ is identifiable. The edges $f_{i,\ell}$ and functions $F_\ell$ for $\ell\in\NN_i$, are identifiable in $G$ if they are identifiable in the induced subgraph \footnote{An induced subgraph $G_S$ of a digraph $G$ is a subgraph formed by a subset $S$ of the vertices of $G$ and all the edges of $G$ that have both endpoints in $S$.} $G_{V\setminus\{ j\}}$ with the measurement of $i$.
\end{lemma}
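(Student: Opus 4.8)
The plan is to reduce identifiability in $G$ to identifiability in $G_{V\setminus\{j\}}$ by showing that the contribution of $j$ to the output of $i$ is a \emph{known} signal, so that the effect of deleting $j$ can be reproduced exactly from the data available in $G$. I work in the full excitation case and read the statement as measuring $i$ in $G$ (this is what exposes its incoming edges), with the subgraph inheriting the measurements of $G$ restricted to $V\setminus\{j\}$ together with that of $i$.

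First I would record the structural consequence of the unique-path hypothesis: no in-neighbor $\ell\in\NN_i\setminus\{j\}$ is reachable from $j$. Otherwise a directed path from $j$ to such an $\ell$, followed by the edge $\ell\to i$, would be a second path from $j$ to $i$, contradicting uniqueness. Since in a DAG the output of a node is determined solely by its ancestors and their excitation signals, and $j$ is an ancestor of none of these $\ell$, deleting $j$ leaves each $y_\ell$ and each $F_\ell$ unchanged; the same holds for every measured node used to resolve the $F_\ell$, as such nodes are ancestors of the $\ell$ and hence not descendants of $j$.

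Next I would write, from \eqref{eq:nonlinear_model},
\begin{equation*}
y_i^k=u_i^{k-1}+f_{i,j}(y_j^{k-m_{i,j}})+\sum_{\ell\in\NN_i\setminus\{j\}}f_{i,\ell}(y_\ell^{k-m_{i,\ell}}),
\end{equation*}
and compare it with the output $y_i'$ of $i$ in $G_{V\setminus\{j\}}$, where the incoming edge $f_{i,j}$ is absent. By the previous step $(y_\ell')^k=y_\ell^k$ for every $\ell\in\NN_i\setminus\{j\}$, whence $(y_i')^k=y_i^k-f_{i,j}(y_j^{k-m_{i,j}})$. As $j$ is measured and $f_{i,j}$ is known, the subtracted term is a known function of the excitation signals, so the subgraph measurement of $i$ is exactly reconstructible from the data of $G$; all remaining subgraph measurements coincide with those of $G$ by the previous step.

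Finally I would close the argument in the sense of Definition~\ref{def:math_identifiability}. Taking two function sets $\{f\},\{\tilde f\}\subseteq\Ftwo$ on $G$ with equal measurements, the hypotheses give $f_{i,j}=\tilde f_{i,j}$ and $y_j=\tilde y_j$, and the measurement of $i$ gives $y_i=\tilde y_i$; the subtraction then yields $(y_i')^k=(\tilde y_i')^k$, and the other subgraph measurements agree as well. Hence the two models, restricted to the edges of $G_{V\setminus\{j\}}$, produce identical subgraph measurements, so identifiability in the subgraph forces $f_{i,\ell}=\tilde f_{i,\ell}$ and $F_\ell=\tilde F_\ell$ for $\ell\in\NN_i\setminus\{j\}$; since these are untouched by the deletion of $j$, the equalities persist in $G$. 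The delicate point, where the unique-path hypothesis is indispensable, is the invariance claim of the second step: one must verify that every node feeding the subgraph procedure is a non-descendant of $j$, the sole exception being $i$, which is handled by the explicit subtraction. The condition $f_{i,j}(0)=0$ built into $\Ftwo$ plays a quiet supporting role, ensuring that the reconstructed reduced output is consistent with a genuine model on $G_{V\setminus\{j\}}$.
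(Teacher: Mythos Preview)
The paper does not prove this lemma: it is quoted verbatim as Lemma~4 of \cite{vizuete2024nonlinear} and used as a black box in the proof of Proposition~\ref{prop:full_measurement_equivalence}. So there is no in-paper proof to compare against.

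Your argument is correct and is the natural one. The key structural observation---that the unique-path hypothesis forces every $\ell\in\NN_i\setminus\{j\}$ (and hence every ancestor of such $\ell$) to be a non-descendant of $j$---is exactly what makes the outputs $y_\ell$ and functions $F_\ell$ invariant under deletion of $j$. With that in hand, the subtraction $y_i^k - f_{i,j}(y_j^{k-m_{i,j}})$ is a known quantity (since $j$ is measured and $f_{i,j}$ is already identified), and it coincides with the output of $i$ in $G_{V\setminus\{j\}}$; the identifiability reduction then follows by Definition~\ref{def:math_identifiability} as you wrote. Your closing remark that the invariance step is where the unique-path hypothesis does its work, and that $f_{i,j}(0)=0$ ensures the subtracted model is consistent with a genuine $\Ftwo$-network on the subgraph, is apt. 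One small clarification worth making explicit: in the full excitation setting the excitation signal $u_j$ survives the deletion only through its effect on descendants of $j$, and since $i$ is the sole such descendant among the nodes that matter for the reduced problem, setting $u_j=0$ (or equivalently absorbing it into the known subtracted term) is harmless---you implicitly rely on this when asserting that the subgraph measurements coincide with those of $G$.
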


\begin{lemma}[Lemma~5 \cite{vizuete2024nonlinear}    ]\label{lemma:auxiliary}
Given three nonzero analytic functions $f:\R\to \R$ and $g,\tilde g:\R^p\to\R$. Let us assume that  $f$ is not linear
and they satisfy:
$$
f(0)= g(0)= \tilde g(0)=0;
$$
\begin{equation}    
\label{eq:statement_lemma_function_h}
f(x\!+\!g(y))=     
f(x\!+\!\tilde g(y))+h,
\end{equation}
where $h$ is an arbitrary function that does not depend on $x$.
Then $g=\tilde g$.
\end{lemma}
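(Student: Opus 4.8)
The plan is to prove the two implications of the equivalence separately, with almost all of the effort going into one direction. The direction ``identifiable with $(\NN^e,\NN^m)$ implies identifiable with $(\NN^e,V)$'' is immediate, since measuring more nodes only adds information: because $\NN^m\subseteq V$, any equality $F(V)=\tilde F(V)$ restricts to $F(\NN^m)=\tilde F(\NN^m)$, and the assumed identifiability with $(\NN^e,\NN^m)$ then forces $\{f\}=\{\tilde f\}$. No structural argument about the graph is needed here.

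For the substantive direction ``identifiable with $(\NN^e,V)$ implies identifiable with $(\NN^e,\NN^m)$'', I would reduce the claim to a reconstruction statement: whenever $F(\NN^m)=\tilde F(\NN^m)$, one already has $F_i=\tilde F_i$ for \emph{every} node $i\in V$, not only for the measured ones. Granting this, $F(V)=\tilde F(V)$ holds, and the assumed full-measurement identifiability yields $\{f\}=\{\tilde f\}$, which is exactly identifiability with $(\NN^e,\NN^m)$. The whole content is therefore to recover the output functions $F_j$ of the unmeasured nodes, which by the standing assumption are all excited, from the measured outputs.

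To carry out the reconstruction I would first fix a topological ordering as in Fig.~\ref{fig:topological_order}, in which between any two consecutive measured nodes $p$ and $q$ every intermediate excited node has a path to $q$. Its existence is a combinatorial fact I would establish separately, by assigning each excited node to the block preceding the earliest measured node it can reach and checking that this assignment is compatible with a topological order (every node of a DAG reaches some sink, and all sinks are measured). I would then induct on the measured nodes in this order, the hypothesis being that $F_j=\tilde F_j$ for all nodes up to and including a measured node $p$. To pass the next measured node $q$, I would use the given equality $F_q=\tilde F_q$ to recover the $F_j$ of the excited nodes in the block between $p$ and $q$, peeling them in reverse topological order. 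At each step I would isolate the branch being treated by setting the excitations of the remaining upstream nodes to zero, which kills their contributions since every $f\in\Ftwo$ satisfies $f(0)=0$; this identifies the edge entering $q$ along that branch, and Lemma~\ref{lemma:auxiliary}, applied with that edge function (necessarily nonlinear, as it lies in $\Ftwo$) in the role of $f$ and the excitation of the adjacent unmeasured node as the free variable $x$, then forces the upstream function, and hence $F_j$, to agree with its tilde counterpart. Already-known in-neighbors (measured, or reconstructed earlier in the induction) are stripped off and the graph reduced by means of Lemma~\ref{lemma:removal_node}.

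The main obstacle I anticipate is precisely this isolation and peeling when $q$ has several in-neighbors and the block contains several excited nodes with overlapping paths to $q$: I must argue that zeroing the appropriate excitations genuinely separates a single nonlinear contribution, so that the hypotheses of Lemma~\ref{lemma:auxiliary} (in particular $f$ nonlinear and $g(0)=\tilde g(0)=0$) hold at every peeling step, and that the reductions via Lemma~\ref{lemma:removal_node} remain legitimate in the partial-excitation setting. Guaranteeing that the special ordering of Fig.~\ref{fig:topological_order} always exists and that the inward peeling can always be executed within it is where the bulk of the care will go.
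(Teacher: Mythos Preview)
Your proposal does not address the stated lemma at all. Lemma~\ref{lemma:auxiliary} is a self-contained analytic fact about functions: given a nonzero nonlinear analytic $f:\R\to\R$ and nonzero analytic $g,\tilde g:\R^p\to\R$, all vanishing at the origin, the identity $f(x+g(y))=f(x+\tilde g(y))+h(y)$ forces $g=\tilde g$. A proof of this would manipulate the functional equation directly---for instance by differentiating in $x$ or by comparing Taylor coefficients in $x$ to kill the $h$ term and then exploit the existence of a nonlinear coefficient in $f$. None of that appears in your write-up.

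What you have actually outlined is a proof of Proposition~\ref{prop:full_measurement_equivalence} (the full-measurement equivalence), in which Lemma~\ref{lemma:auxiliary} is \emph{invoked} as a black box, not proved. Incidentally, your outline of that proposition matches the paper's argument almost exactly: the adapted topological ordering so that every excited node between consecutive measured nodes $p$ and $q$ reaches $q$, the reverse-order peeling of the block, the identification of the last incoming edge by zeroing all other excitations, the recovery of $F_{q-1}=\tilde F_{q-1}$ via Lemma~\ref{lemma:auxiliary}, and the graph reduction via Lemma~\ref{lemma:removal_node}. But this is the proof of the wrong statement. Note also that the paper itself does not prove Lemma~\ref{lemma:auxiliary}; it is quoted from~\cite{vizuete2024nonlinear}, so there is no in-paper argument to compare against---you would need to supply the analytic argument from scratch.
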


\begin{myproof}{Proposition~\ref{prop:full_measurement_equivalence}}
Clearly, if the DAG is identifiable with $(\NN^e,\NN^m)$, is also identifiable with $(\NN^e,V)$ since the full measurement case also includes the information of $\NN^m$ used for the identifiability.
    Now, let us consider an arbitrary DAG with a set of measured nodes $\NN^m$ and a set of excited nodes $\NN^e$ where all the sources are excited and all the sinks are measured. Since any DAG can be sorted in topological order \footnote{A topological ordering of a digraph is a linear ordering of its nodes such that for every edge $(i,j)$, $j$ comes before $i$ in the ordering.} \cite{bang2008digraphs}, let us consider a particular ordering where all the nodes located between two consecutive measured nodes $p$ and $q$, have a path to $q$. 
    Notice that if there is a topological ordering with a node $w$ between $p$ and $q$ that does not have a path to $q$, the node $w$ could be shifted to the right of $q$ in this topological ordering. 
    (For example, see Fig.~\ref{fig:topological_order}). The measurement of the node $q$ provides an output of the type:

    \vspace{-3mm}

    \small
    \begin{align*}
    y_q^k&=F_q\\
    &=\sum_{j\in\NN_q^e}f_{q,j}(u_j^{k-m_{q,j}-1}+F_j^{(m_{q,j})})\!+\!\sum_{\ell\in\NN_q^m}f_{q,\ell}(F_\ell^{(m_{q,\ell})}),    
    \end{align*}
    
    \normalsize
    \noindent    where $\NN_q^e$ is the set of excited in-neighbors of $q$ and $\NN_q^m$ is the set of measured in-neighbors of $q$. Since $F_q\in F(\NN^m)$, let us assume that there exists a set $\{\tilde f\}\neq \{ f\}$ such that $F_q=\tilde F_q$. Then, we obtain:
    \begin{multline*}
        \sum_{j\in\NN_q^e}f_{q,j}(u_j^{k-m_{q,j}-1}+F_j^{(m_{q,j})})+\sum_{\ell\in\NN_q^m}f_{q,\ell}(F_\ell^{(m_{q,\ell})})=\\
        \sum_{j\in\NN_q^e}\tilde f_{q,j}(u_j^{k-m_{q,j}-1}+\tilde F_j^{(m_{q,j})})+\sum_{\ell\in\NN_q^m}\tilde f_{q,\ell}(\tilde F_\ell^{(m_{q,\ell})}).
    \end{multline*}
    Clearly, the node before $q$ denoted by $q-1$ has only one path to $q$. Then, we can set to zero all the inputs except by $u_{q-1}^{k-m_{q,q-1}-1}$, to have for all $u_{q-1}^{k-m_{q,q-1}-1}\in\R$:
    $$
    f_{q,q-1}(u_{q-1}^{k-m_{q,q-1}-1})=\tilde f_{q,q-1}(u_{q-1}^{k-m_{q,q-1}-1}), 
    $$
    which implies $f_{q,q-1}=\tilde f_{q,q-1}$. Then, by Lemma~\ref{lemma:auxiliary}, we guarantee that $F_{q-1}=\tilde F_{q-1}$, which is equivalent to the measurement of the node $q-1$. Now, the identifiability problem becomes:

\vspace{-3mm}

\small    
    \begin{multline*}
        \!\!\!\sum_{j\in\NN_q^e\setminus \{q-1\}}f_{q,j}(u_j^{k-m_{q,j}-1}+F_j^{(m_{q,j})})+\sum_{\ell\in\NN_q^m}f_{q,\ell}(F_\ell^{(m_{q,\ell})})=\\
        \sum_{j\in\NN_q^e\setminus \{q-1\}}\tilde f_{q,j}(u_j^{k-m_{q,j}-1}+\tilde F_j^{(m_{q,j})})+\sum_{\ell\in\NN_q^m}\tilde f_{q,\ell}(\tilde F_\ell^{(m_{q,\ell})}).
    \end{multline*}
    \normalsize

\noindent    By Lemma~\ref{lemma:removal_node}, this new identifiability problem can be solved if the induced subgraph $G_{V\setminus\{ q-1\}}$ obtained with the removal of $q-1$ is identifiable. In this subgraph $G_{V\setminus\{ q-1\}}$, the node $q-2$ has only one path to $q$, and we can use a similar approach to prove that $f_{q,q-2}=\tilde f_{q,q-2}$ and $F_{q-2}=\tilde F_{q-2}$. By following the same procedure, we can guarantee that $F_v=\tilde F_v$ for all $v$ between $p$ and $q$, which corresponds to the mathematical constraints obtained with the measurement of these nodes.  
    In the case of the first measured node in the topological ordering, we apply the procedure until the first node in the topological ordering. Notice that this procedure can be used for any measured node such that all the nodes of the network can be covered. This implies that if the DAG is identifiable with $(\NN^e,V)$, it is also identifiable with $(\NN^e,\NN^m)$.
\end{myproof}

Since any identification pattern guarantees the access to the information corresponding to the measurement of all the nodes in the network, the measurement of an excited node does not change the identifiability of a DAG.

\begin{remark}[Patterns with $N^{m,e}>n$]
In an identification pattern $(\NN^e,\NN^m)$ that satisfies $N^{m,e}>n$, there must be at least a node which is excited and measured at the same time. If a DAG is identifiable with $(\NN^e,\NN^m)$, it is also identifiable with another identification pattern $(\hat\NN^e,\hat\NN^m)$ given by $\hat\NN^e= \NN^e$ and $\hat\NN^m=\NN^m\setminus\{\ell\}$, where $\ell\in\NN^e$ and $\ell\in\NN^m$. In this way, the identification pattern $(\NN^e,\NN^m)$ can generate a family of subpatterns by removing excited nodes from the set of measured nodes. 
\end{remark}

Proposition~\ref{prop:full_measurement_equivalence} illustrates that the information that can be obtained with the excitation of a node (extra input) is in some way more useful than the information that can be obtained with the measurement of a node.  
This notion will be also corroborated with the absence of symmetry full excitation/full measurement in the case of general DAGs.

\section{Identifiability conditions for partial excitation and measurement}

\subsection{Identifiability of trees}

We start by deriving identifiability conditions for trees in the case of partial excitation and measurement.

\begin{proposition}[Trees]\label{prop:trees_partial}
    A tree is identifiable in the class $\Ftwo$ with $\Nme=n$, if and only if all the sources are excited, all the sinks are measured and the other nodes are either excited or measured. 
\end{proposition}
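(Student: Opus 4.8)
The $(\Rightarrow)$ direction is immediate from the results already proved: by Lemma~\ref{lemma:sinks_sources} identifiability forces the excitation of every source and the measurement of every sink, and by Proposition~\ref{prop:excite_measure} every remaining node must be excited or measured; together with $\Nme=n$ this is exactly the stated pattern (in particular no node is both excited and measured). The content is the $(\Leftarrow)$ direction, and the plan is first to reduce to the full measurement case and then to exploit the tree structure to isolate and identify each edge separately. By Proposition~\ref{prop:full_measurement_equivalence}, the tree is identifiable with $(\NN^e,\NN^m)$ if and only if it is identifiable with $(\NN^e,V)$, so it suffices to show that a tree in which all sources are excited is identifiable under full measurement. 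Under full measurement, measuring a node $i$ with in-neighbours $\NN_i=\{j_1,\dots,j_d\}$ yields $y_i^k=u_i^{k-1}+\sum_{\ell=1}^{d} f_{i,j_\ell}\prt{y_{j_\ell}^{\,k-m_{i,j_\ell}}}$, where each $y_{j_\ell}$ is a known function of the external inputs; I will show each $f_{i,j_1}$ is determined.

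The key structural ingredient is a tree-disjointness fact: for a node $i$ with in-neighbours $j_1,\dots,j_d$, the ``ancestor sets'' $A_\ell$ — each consisting of $j_\ell$ together with all nodes having a directed path to $j_\ell$ — are pairwise disjoint and do not contain $i$. Indeed, a node in $A_\ell\cap A_{\ell'}$ would produce two distinct directed (hence distinct undirected) paths to $i$, one ending $j_\ell\to i$ and one ending $j_{\ell'}\to i$, contradicting uniqueness of paths in a tree. Consequently the external inputs that can influence $y_{j_\ell}$ form disjoint sets across $\ell$, and every external input that reaches $i$ does so through exactly one $j_\ell$.

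To identify $f_{i,j_1}$, set to zero every external input except those located in $A_1$, and also $u_i$. Since $f(0)=0$ for every $f\in\Ftwo$, an induction along a topological ordering gives $y_{j_\ell}\equiv 0$ for all $\ell\neq1$ (all of their ancestor inputs vanish, and so do the nested function evaluations), so the measurement of $i$ reduces to $y_i^k=f_{i,j_1}\prt{y_{j_1}^{\,k-m_{i,j_1}}}$. It remains to argue that $y_{j_1}$ can be driven to any value of $\R$: trace a directed path $s\to\cdots\to j_1$ from a source $s$ (excited, since all sources are excited; if $j_1$ is itself a source take $s=j_1$), keep only $u_s$ active among the inputs of $A_1$, and note that at each node of this path the contributions of the other in-neighbours again vanish by $f(0)=0$ together with disjointness; hence $y_{j_1}$ equals a composition of functions of $\Ftwo$ applied to $u_s$, which is surjective onto $\R$ because each factor has range $\R$. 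Choosing the input signals so that $y_{j_1}^{\,k-m_{i,j_1}}$ equals an arbitrary $x\in\R$, the relation $f_{i,j_1}(x)=\tilde f_{i,j_1}(x)$ holds for all $x\in\R$ whenever $F(\NN^m)=\tilde F(\NN^m)$, i.e. $f_{i,j_1}=\tilde f_{i,j_1}$. As $i$ and $j_1$ are arbitrary, every edge is identified, so the tree is identifiable with $(\NN^e,V)$ and therefore with $(\NN^e,\NN^m)$.

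The two delicate points are the tree-disjointness claim — which is precisely what fails for general DAGs, and is the reason the full-excitation/full-measurement symmetry breaks there — and the bookkeeping of the delays when showing that $y_{j_1}$ attains every real value; the former is handled by the unique-path property of trees and the latter by closure of $\Ftwo$ under composition and free choice of the excitation signals over time.
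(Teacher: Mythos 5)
Your proof is correct and follows essentially the same route as the paper's: reduce to full measurement via Proposition~\ref{prop:full_measurement_equivalence}, then isolate a single directed source-to-node path by zeroing all other excitations (using $f(0)=0$ together with the unique-path property of the tree) and exploit surjectivity of the resulting composition of $\Ftwo$-functions to pin down the edge. The one step worth making explicit is that your final identity $f_{i,j_1}(x)=\tilde f_{i,j_1}(x)$ needs $y_{j_1}=\tilde y_{j_1}$ as functions of the inputs, which holds because $j_1$ is measured after the full-measurement reduction (the paper secures the analogous point by inducting along the path so that the composition $\Theta_{i-1}$ is already identified and common to both sides).
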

\begin{proof}
By Lemma~\ref{lemma:sinks_sources}, the excitation of all the sources and the measurement of all the sinks is necessary, and by Proposition~\ref{prop:excite_measure}, the other nodes must be either excited or measured. We will prove the sufficiency by induction. According to Proposition~\ref{prop:full_measurement_equivalence}, the DAG is identifiable if (and only if) it is identifiable with $\NN^m=V$.
Let us assume that there exists a set $\{ f \}=\{ \tilde f \}$ such that $F(\NN^m)=\tilde F(\NN^m)$.
    Let us consider an arbitrary path of the tree that begins in a source and ends in a sink, and let us set to zero all the inputs except by the excitation signal corresponding to the source (node 1). For any node $i$ in the path that is not a source, the output is given by:
    \begin{align*}
        y_i^k
        &=f_{i,i-1}(\Theta_{i-1}(u_1^{k-M_{i,1}})),
    \end{align*}
    where $u_1$ is the excitation signal of the source with the corresponding delay $M_{i,1}$, and $\Theta_{i-1}$ is a composition of nonlinear functions associated with the edges. Let us assume that all the edges until the node $i-1$ have been identified. This implies that we know the function $\Theta_{i-1}$. Then, since $F_i\in F(\NN^m)$, we have the identifiability problem:
    $$
f_{i,i-1}(\Theta_{i-1}(u_1^{k-M_{i,1}}))=\tilde f_{i,i-1}(\Theta_{i-1}(u_1^{k-M_{i,1}})).
    $$
    Since the range of all the nonlinear functions is $\R$, we have that the range of $\Theta_{i-1}$ is also $\R$. Therefore, we obtain that $f_{i,i-1}(x)=\tilde f_{i,i-1}(x)$ for all $x\in\R$, which implies that $f_{i,i-1}=\tilde f_{i,i-1}$. 
    
    \noindent Now, notice that when $i=2$, the function $\Theta_{1}$ is the identity function, and clearly the edge $f_{2,1}$ can be identified. Then, by induction, all the edges of the path can be identified.
Finally, by considering other paths, we can identify all the tree.
\end{proof}

This shows that similarly to the linear case \cite{bazanella2019network}, any identification pattern with $N^{m,e}=n$, guarantees the identifiability of a tree. Notice that this also holds for path graphs and arborescences, which, unlike trees, necessarily have only one source.

According to Proposition~\ref{prop:trees_partial}, an identification pattern where all the sources are excited and the other nodes are measured, provides the identification of a tree. This corresponds to the symmetric result of the identifiability conditions for trees in the full excitation case where the measurement of the sinks is necessary and sufficient \cite{vizuete2023nonlinear,vizuete2024nonlinear}. Therefore, symmetry holds for trees.

\subsection{Generic nonlinear network matrix}

\begin{figure}
    \centering

\vspace{2mm}
    
    \begin{tikzpicture}
    [
roundnodes/.style={circle, draw=black!60, fill=black!5, very thick, minimum size=1mm},roundnode/.style={circle, draw=white!60, fill=white!5, very thick, minimum size=1mm},roundnodes2/.style={circle, draw=black!60, fill=black!30, very thick, minimum size=1mm},roundnodes3/.style={circle, draw=black!60, fill=white!30, very thick, minimum size=1mm}
]

\small

\node[roundnodes3](node1){1};
\node[roundnodes3](node2)[below=of node1,yshift=0mm,xshift=0cm]{2};

\node[roundnodes3](node3)[right=of node1,yshift=8mm,xshift=7mm]{3};
\node[roundnodes2](node4)[below=of node3,yshift=0mm,xshift=0cm]{4};
\node[roundnodes3](node5)[below=of node4,yshift=0mm,xshift=0cm]{5};

\node[roundnodes3](node6)[right=of node1,yshift=0mm,xshift=29mm]{6};
\node[roundnodes2](node7)[below=of node6,yshift=0mm,xshift=0cm]{7};

\node[roundnodes2](node8)[right=of node6,yshift=0mm,xshift=7mm]{8};
\node[roundnodes2](node9)[below=of node8,yshift=0mm,xshift=0cm]{9};

\draw[black!70,dashed,rounded corners=15pt]
  (-0.6,-3) rectangle ++(1.2,4.5) (0,0);
\draw[black!70,dashed,rounded corners=15pt]
  (1.71,-3) rectangle ++(1.2,4.5) (0,0);
  \draw[black!70,dashed,rounded corners=15pt]
  (3.9,-3) rectangle ++(1.2,4.5) (0,0);
  \draw[black!70,dashed,rounded corners=15pt]
  (6.2,-3) rectangle ++(1.2,4.5) (0,0);

\draw[-{Classical TikZ Rightarrow[length=1.5mm]}] (node1) to node [above,swap,yshift=0mm] {} (node3);
\draw[-{Classical TikZ Rightarrow[length=1.5mm]}] (node1) to node [above,swap,yshift=0mm] {} (node4);
\draw[-{Classical TikZ Rightarrow[length=1.5mm]}] (node2) to node [above,swap,yshift=0mm] {} (node5);
\draw[-{Classical TikZ Rightarrow[length=1.5mm]}] (node4) to node [above,swap,yshift=0mm] {} (node7);
\draw[-{Classical TikZ Rightarrow[length=1.5mm]}] (node5) to node [above,swap,yshift=0mm] {} (node7);
\draw[-{Classical TikZ Rightarrow[length=1.5mm]}] (node3) to node [above,swap,yshift=0mm] {} (node6);
\draw[-{Classical TikZ Rightarrow[length=1.5mm]}] (node3) to node [above,swap,yshift=0mm] {} (node7);
\draw[-{Classical TikZ Rightarrow[length=1.5mm]}] (node6) to node [above,swap,yshift=0mm] {} (node8);
\draw[-{Classical TikZ Rightarrow[length=1.5mm]}] (node7) to node [above,swap,yshift=0mm] {} (node9);
\draw[-{Classical TikZ Rightarrow[length=1.5mm]}] (node7) to node [above,swap,yshift=0mm] {} (node8);

\normalsize

\end{tikzpicture}
\vspace{-1mm}
    \caption{If the delays of all the nonlinear functions in a multipartite digraph are the same, all the excitation signals associated with a node in the function \eqref{eq:function_Fi} have the same delay.
    }

    \vspace{-4mm}
    \label{fig:multipartite_digraph}
\end{figure}
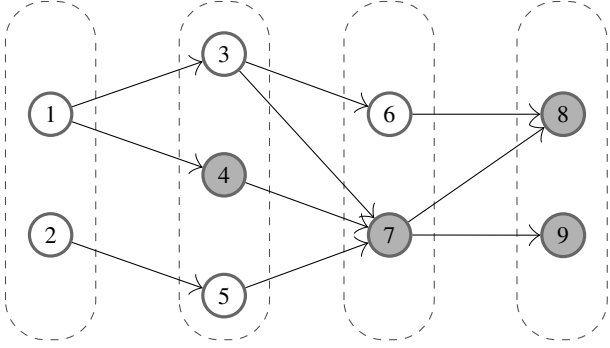

One of the main challenges in the identification of networks is to distinguish the information that arrives to a node $i$ through different paths. For instance, let us consider again the graph in Fig.~\ref{fig:model} with functions $f_{2,1}(y_1^{k-m_{2,1}})$, $f_{3,2}(y_2^{k-m_{3,2}})$ and $f_{3,1}(y_1^{k-m_{3,1}})$. 
The measurement of node 3 provides the output:
\begin{align}
y_3^k&=f_{3,1}(y_1^{k-m_{3,1}})+f_{3,2}(y_2^{k-m_{3,2}})\nonumber\\
&=f_{3,1}(u_1^{k-m_{3,1}-1})+f_{3,2}(f_{2,1}(u_1^{k-m_{2,1}-m_{3,2}-1})),\label{eq:relation_m_T_0}
\end{align}
If $m_{3,1}=m_{2,1}+m_{3,2}$, the output \eqref{eq:relation_m_T_0} becomes:
\begin{equation}\label{eq:relation_m_T}
    y_3^k=f_{3,1}(u_1^{k-m_{2,1}-m_{3,2}-1})+f_{3,2}(f_{2,1}(u_1^{k-m_{2,1}-m_{3,2}-1}))
\end{equation}
which implies that the information coming through $f_{3,1}$ and $f_{3,2}$ cannot be distinguished for this particular choice of delays $m_{i,j}$. 
Notice that for many networks,
there are particular choices of the delays associated with the nonlinear functions such that for each node $i$ in the network, the excitation signals corresponding to the same excited node in \eqref{eq:function_Fi} have the same delays.
For instance, this is the case for multipartite digraphs like the one in Fig.~\ref{fig:multipartite_digraph}, when the delays of all the nonlinear functions are the same. This particular setting is important for the generalization of results to more general dynamical models where the nonlinear functions can depend on the outputs of nodes with several delays and the mixing of information is possible \cite{vizuete2024nonlinear}. Furthermore, the identification of networks with this particular choice of delays is more challenging since the number of excitation variables that can be used is smaller. For this reason, we will analyze this worst-case scenario and we will consider that \eqref{eq:function_Fi} is given by:
\begin{equation}
    \label{eq:function_Fi_2}
    y_i^k\!=\!u_i^{k-1}\!+\!F_i(u_1^{k-T_{i,1}}\!,\ldots,u_{n_i}^{k-T_{i,n_i}}), \text{ for }\\
    1,\dots,n_i\!\in\! \mathcal{N}^{e\to i}.
\end{equation}
Regarding the measurement of the node 3 in \eqref{eq:relation_m_T}, the function $F_3$ will be of the form:
$$
y_3^k=F_3(u_1^{k-T_{3,1}}),
$$
where $T_{3,1}=m_{2,1}+m_{3,2}+1$.

In a more general DAG, a node can have more than one in-neighbor that shares several excitation signals.
Unlike the full excitation case, where each node has its own excitation signal, and identifiability conditions are valid for any function in a specific class, in the case of partial excitation and measurement, particular choices of functions could make a network unidentifiable.

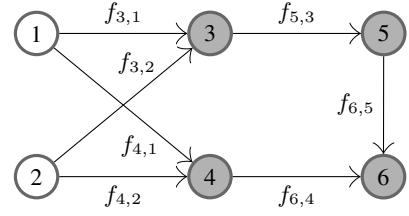
\begin{figure}[!t]
    \centering

\vspace{2mm}
    
    \begin{tikzpicture}
    [
roundnodes/.style={circle, draw=black!60, fill=black!5, very thick, minimum size=1mm},roundnode/.style={circle, draw=white!60, fill=white!5, very thick, minimum size=1mm},roundnodes2/.style={circle, draw=black!60, fill=black!30, very thick, minimum size=1mm},roundnodes3/.style={circle, draw=black!60, fill=white!30, very thick, minimum size=1mm}
]

\small

\node[roundnodes3](node1){1};
\node[roundnodes3](node5)[below=of node1,yshift=-0.3cm,xshift=0cm]{2};
\node[roundnodes2](node2)[right=of node1,yshift=0mm,xshift=0.7cm]{3};
\node[roundnodes2](node3)[right=of node5,yshift=0mm,xshift=0.7cm]{4};
\node[roundnodes2](node4)[right=of node3,yshift=0cm,xshift=0.7cm]{6};
\node[roundnodes2](node6)[right=of node2,yshift=0cm,xshift=0.7cm]{5};

\draw[-{Classical TikZ Rightarrow[length=1.5mm]}] (node1) to node [above,swap,yshift=0mm] {$f_{3,1}$} (node2);
\draw[-{Classical TikZ Rightarrow[length=1.5mm]}] (node2) to node [above,swap,yshift=0mm] {$f_{5,3}$} (node6);
\draw[-{Classical TikZ Rightarrow[length=1.5mm]}] (node1) to node [right,swap,xshift=-0.15cm,yshift=-5.5mm] {$f_{4,1}$} (node3);
\draw[-{Classical TikZ Rightarrow[length=1.5mm]}] (node3) to node [below,swap,yshift=0mm] {$f_{6,4}$} (node4);
\draw[-{Classical TikZ Rightarrow[length=1.5mm]}] (node5) to node [left,swap,xshift=0.55cm,yshift=5.5mm] {$f_{3,2}$} (node2);
\draw[-{Classical TikZ Rightarrow[length=1.5mm]}] (node5) to node [below,swap,yshift=0mm] {$f_{4,2}$} (node3);
\draw[-{Classical TikZ Rightarrow[length=1.5mm]}] (node6) to node [left,swap,yshift=0mm] {$f_{6,5}$} (node4);

\normalsize

\end{tikzpicture}
\vspace{-3mm}
    \caption{A DAG where a particular choice of functions makes the network unidentifiable. If $f_{3,1}=\gamma f_{4,1}$ and $f_{3,2}=\gamma f_{4,2}$ with $\gamma\neq 0$, the functions $f_{6,4}$ and $f_{6,5}$ cannot be identified.
    }
    \vspace{-4mm}
    \label{fig:DAG_unidentifiable}
\end{figure}

\begin{example}\label{ex:particular_choice}
    Let us consider the DAG in Fig.~\ref{fig:DAG_unidentifiable}. Let us assume that the functions $f_{3,1}$, $f_{3,2}$, $f_{4,1}$, $f_{4,2}$ and $f_{5,3}$ have been identified.
    The measurement of the node 6 provides the output:
    \begin{multline}\label{eq:example_generic}
    y_6^k=f_{6,5}(f_{5,3}(f_{3,1}(u_1^{k-T_{5,1}})+f_{3,2}(u_2^{k-T_{5,2}})))\\+f_{6,4}(f_{4,1}(u_1^{k-T_{5,1}})+f_{4,2}(u_2^{k-T_{5,2}})).    
    \end{multline}
    Notice that if $f_{3,1}=\gamma f_{4,1}$ and $f_{3,2}=\gamma f_{4,2}$ with $\gamma\neq 0$, the functions $f_{6,5}$ and $f_{6,4}$ cannot be identified since $\tilde f_{6,5}(x)=f_{6,4}(f_{5,3}^{-1}(\frac{x}{\gamma}))$ and $\tilde f_{6,4}(x)=f_{6,5}(f_{5,3}(\gamma x))$ also generates the output \eqref{eq:example_generic}.
\end{example}

Example~\ref{ex:particular_choice} shows that a DAG could be identifiable for most of the functions, except for particular choices
(e.g., two in-neighbors $p$ and $q$ of a node with outputs of the form $y_p=\gamma y_q$ for $\gamma\neq 0$).
This is similar to the concept of \emph{generic identifiability} in the linear case, where a network can be identifiable except for a particular choice of transfer functions that remains in a set of measure zero \cite{hendrickx2019identifiability}. In the context of nonlinear identifiability, we will introduce a  notion similar to the linear case by defining a generic matrix.

For a network $G$, let us define the matrix $J_\FF\in\R^{n\times n}$, where $[J_\FF]_{i,j}=f'_{i,j}(y_j^{k-m_{i,j}})$. This matrix can be considered as the adjacency matrix of a linear network $G_\FF$, that preserves the same topology of $G$ but with edges of the form $f'_{i,j}(y_j^{k-m_{i,j}})$. Since the output of any node can be expressed as a function of the excitation signals (see \eqref{eq:function_Fi_2}), the matrix $J_\FF$ can also be expressed as a function of the excitation signals with the corresponding delays. Now, let us consider a modification of the matrix $J_\FF$ where all the excitation signals corresponding to a node $i$, including different delays, are considered as the same input $v_i$. We call this new matrix the \emph{nonlinear network matrix} and we denote it by $J_G(\vv)$, where $\vv\in\R^{\abs{\NN^e}}$.
Since $J_G(\vv)$ is upper triangular in a DAG, we can guarantee that the matrix $T_G(\vv):=(I-J_G(\vv))^{-1}=\sum_{n=0}^\infty (J_G(\vv))^n$ is well \linebreak defined \cite{hendrickx2019identifiability}. Notice that an entry $[T_G]_{j,i}$ corresponds to the sum of the product of the edges corresponding to all the walks from $i$ to $j$ in $G_\FF$.

If we set free variables for the nonzero entries (i.e., edges) of the matrix $J_G(\vv)$, we say that for $A,B\subseteq V$, the rank of a submatrix $T_G^{A,B}(\vv)$ of  $T_G(\vv)=(I-J_G(\vv))^{-1}$ is maximal if it is maximal with respect to the free variables\linebreak of  $J_G(\vv)$ \cite{van1991graph}.

\begin{definition}[Generic nonlinear network matrix]
    We say that the nonlinear network matrix $J_G(\vv)$ associated to the network is generic if there exists a point $\vv^*\in\R^{\abs{\NN^e}}$ such that any submatrix $T_G^{A,B}(\vv^*)$ of $T_G(\vv^*)=(I-J_G(\vv^*))^{-1}$ has maximal rank.
\end{definition}
This generic nonlinear network matrix will be used in the derivation of identifiability conditions for general DAGs.

\subsection{Identifiability of DAGs}

Unlike trees, first, we will show that for more general DAGs, the identifiability conditions in the full excitation case do not have a symmetric equivalence in the full measurement case since the excitation of sources is not sufficient to guarantee identifiability.    

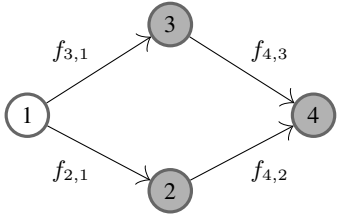
\begin{figure}
    \centering

\vspace{2mm}
    
    \begin{tikzpicture}
    [
roundnodes/.style={circle, draw=black!60, fill=black!5, very thick, minimum size=1mm},roundnode/.style={circle, draw=white!60, fill=white!5, very thick, minimum size=1mm},roundnodes2/.style={circle, draw=black!60, fill=black!30, very thick, minimum size=1mm},roundnodes3/.style={circle, draw=black!60, fill=white!30, very thick, minimum size=1mm}
]

\small

\node[roundnodes3](node1){1};
\node[roundnodes2](node2)[above=of node1,yshift=-4mm,xshift=1.9cm]{3};
\node[roundnodes2](node4)[right=3.2cm of node1]{4};
\node[roundnodes2](node3)[below=of node1,yshift=6mm,xshift=1.9cm]{2};

\draw[-{Classical TikZ Rightarrow[length=1.5mm]}] (node1) to node [left,swap,yshift=2.5mm] {$f_{3,1}$} (node2);
\draw[-{Classical TikZ Rightarrow[length=1.5mm]}] (node2) to node [right,swap,yshift=2.5mm] {$f_{4,3}$} (node4);
\draw[-{Classical TikZ Rightarrow[length=1.5mm]}] (node1) to node [left,swap,yshift=-2.5mm] {$f_{2,1}$} (node3);
\draw[-{Classical TikZ Rightarrow[length=1.5mm]}] (node3) to node [right,swap,yshift=-2.5mm] {$f_{4,2}$} (node4);

\normalsize

\end{tikzpicture}
\vspace{-2mm}
    \caption{Symmetry full excitation/full measurement does not hold for this DAG.
    }
    \vspace{-4mm}
    \label{fig:DAG_bridge}
    
\end{figure}

\begin{example}\label{ex:counterexample}
    Let us consider the DAG in Fig.~\ref{fig:DAG_bridge} with the functions $f_{2,1}(x)=a_{2,1}x^3$, $f_{3,1}(x)=a_{3,1}x^3$, $f_{4,2}(x)=a_{4,2}x^3$ and $f_{4,3}(x)=a_{4,3}x^3$, that belong to $\Ftwo$. If the symmetry full excitation/full measurement holds, the excitation of the source 1 should be sufficient to guarantee the identifiability of the network in the full measurement case. 
    The measurement of the node 2 provides the identification of $f_{2,1}$ and the measurement of the node 3 provides the identification of $f_{3,1}$. The measurement of the node 4 provides the output:   
    \begin{align}
    y_4^k
    &=f_{4,2}(f_{2,1}(u_1^{k-T_{4,1}}))+f_{4,3}(f_{3,1}(u_1^{k-T_{4,1}}))\nonumber\\
    &=(a_{4,2}a_{2,1}^3+a_{4,3}a_{3,1}^3)(u_1^{k-T_{4,1}})^9.\label{eq:F4}
    \end{align}
    Notice that the output \eqref{eq:F4} can also be generated with the functions $\tilde f_{4,2}(x)=(a_{4,2}+\gamma\frac{a_{3,1}^3}{a_{2,1}^3})x^3$ and $\tilde f_{4,3}(x)=(a_{4,3}-\gamma)x^3$ with $\gamma\neq 0$, which implies that it is not possible to identify the functions $f_{4,2}$ and $f_{4,3}$. 
 \end{example}

Unlike the DAG in Fig.~\ref{fig:DAG_unidentifiable}, the DAG in Fig.~\ref{fig:DAG_bridge}  is unidentifiable for any choice of the parameters of the cubic functions, which shows that the excitation of the sources is not sufficient to guarantee identifiability of DAGs in the class $\Ftwo$ in the full measurement case. In order to discard cases where the identifiability is not possible only due to a particular choice of functions, we make the following assumption.

\begin{assumption}\label{ass:nogamma}
    The nonlinear network matrix $J_G(\vv)$ associated with the network is generic.
\end{assumption}

Based on the notions of vertex-disjoint paths introduced in the linear case, we will provide sufficient conditions that guarantee identifiability of a DAG with a specific identification pattern.

\begin{definition}[Vertex-Disjoint Paths \cite{hendrickx2019identifiability}]
    A group of paths are mutually vertex disjoint if no two paths of this group contain the same vertex.
\end{definition}

\begin{theorem}[DAGs]\label{thm:DAG}
    Under Assumption~\ref{ass:nogamma}, a DAG is identifiable in the class $\Ftwo$ with $\Nme=n$, if:
    \begin{enumerate}
        \item All the sources are excited, all the sinks are measured and the other nodes are either excited or measured.
        \item There are vertex-disjoint paths from excited nodes to the in-neighbors of each node.
    \end{enumerate}
\end{theorem}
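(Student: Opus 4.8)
The plan is to combine the full-measurement reduction of Proposition~\ref{prop:full_measurement_equivalence} with an induction along a topological ordering, invoking the two graph-theoretic hypotheses (through Assumption~\ref{ass:nogamma}) only to decouple the functional equation that pins down the incoming edges of the node being processed. By Proposition~\ref{prop:full_measurement_equivalence} it suffices to treat the pattern $(\NN^e,V)$, so we may assume every node is measured while $\NN^e$ still contains all sources. Fix a topological ordering $1,\dots,n$ in which every edge runs from a lower to a higher index, and suppose $\{f\}$ and $\{\tilde f\}$ generate the same $F_i$ for every $i$. We show by induction on $k$ that $f_{k,\ell}=\tilde f_{k,\ell}$ for all $\ell\in\NN_k$; the case $k=1$ (a source, $\NN_1=\emptyset$) is vacuous, and sources contribute nothing at any stage. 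For the step at $k$, the inductive hypothesis gives $F_\ell=\tilde F_\ell$ for all $\ell<k$, hence $\xi_\ell=\tilde\xi_\ell$ for all $\ell\in\NN_k\subseteq\{1,\dots,k-1\}$, where $\xi_\ell$ is the signal entering the edge $(k,\ell)$ (a delayed copy of $F_\ell$, plus $u_\ell$ when $\ell$ is excited). Measuring $k$ then yields, for all admissible inputs,
$$\sum_{\ell\in\NN_k} f_{k,\ell}(\xi_\ell)=\sum_{\ell\in\NN_k} \tilde f_{k,\ell}(\xi_\ell),$$
and the crucial point is that both sides depend on the excitations \emph{only through} the tuple $(\xi_\ell)_{\ell\in\NN_k}$.

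To decouple this identity, restrict to static inputs, so that each $\xi_\ell$ is an analytic function of $\vv\in\R^{\abs{\NN^e}}$ whose Jacobian in $\vv$ is row $\ell$ of $T_G(\vv)=(I-J_G(\vv))^{-1}$ restricted to the excited columns. By Assumption~\ref{ass:nogamma} there is a point $\vv^*$ at which every submatrix of $T_G(\vv^*)$ attains its maximal (generic) rank; since the generic rank of a submatrix of $T_G$ equals the maximum number of vertex-disjoint paths between the corresponding node sets \cite{van1991graph,hendrickx2019identifiability}, hypothesis~2 forces $T_G^{\NN_k,\NN^e}(\vv^*)$ to have full row rank $\abs{\NN_k}$. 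Picking $\abs{\NN_k}$ independent columns, i.e. a subset $S\subseteq\NN^e$ of excited nodes, the map $(v_s)_{s\in S}\mapsto(\xi_\ell)_{\ell\in\NN_k}$ (with the remaining components of $\vv$ frozen at $\vv^*$) has invertible Jacobian at $\vv^*$, so by the inverse function theorem it carries a neighborhood of $\vv^*$ diffeomorphically onto an open set $\mathcal U\subseteq\R^{\abs{\NN_k}}$; thus the arguments $(\xi_\ell)_{\ell\in\NN_k}$ range independently over $\mathcal U$. Fixing on $\mathcal U$ all of these coordinates but one, say $\xi_{\ell_0}$, the displayed identity shows that $f_{k,\ell_0}-\tilde f_{k,\ell_0}$ is constant on a nonempty open interval, hence constant on $\R$ by analyticity, hence equal to $f_{k,\ell_0}(0)-\tilde f_{k,\ell_0}(0)=0$. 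So $f_{k,\ell_0}=\tilde f_{k,\ell_0}$ for every $\ell_0\in\NN_k$, which closes the induction and proves the theorem.

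I expect the middle step to be the real obstacle: turning the purely combinatorial hypothesis~2 into an invertible Jacobian at one concrete operating point is exactly what Assumption~\ref{ass:nogamma} is designed to provide — it rules out the algebraic degeneracies of Example~\ref{ex:particular_choice} — and it relies on the dictionary between the generic rank of $(I-J)^{-1}$ and vertex-disjoint paths. By contrast, the full-measurement reduction, the analytic continuation from $\mathcal U$ to $\R$, and the bookkeeping around the topological ordering are comparatively routine; some care is only needed to confirm that, at static inputs, each $\xi_\ell$ is indeed analytic in $\vv$ and that its $\vv$-Jacobian is the claimed submatrix of $T_G$.
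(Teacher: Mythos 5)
Your proposal is correct and follows essentially the same route as the paper's proof: reduction to full measurement via Proposition~\ref{prop:full_measurement_equivalence}, induction along a topological ordering, and use of Assumption~\ref{ass:nogamma} together with the vertex-disjoint-path/generic-rank dictionary to make the map from excitations to the in-neighbor signals locally onto an open set, followed by analytic continuation. The only cosmetic differences are that you invoke the inverse function theorem on a square subselection of columns where the paper cites a surjective-differential lemma (Lemma~\ref{lemma:mapping}), and you conclude $f_{k,\ell_0}=\tilde f_{k,\ell_0}$ coordinate-by-coordinate where the paper first applies the multivariate identity theorem to $F_i-\tilde F_i$.
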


Before presenting the proof of Theorem~\ref{thm:DAG}, we recall a technical result that will be used in the proof.

\begin{lemma}[Theorem 2.35 \cite{laczkovich2017real}]\label{lemma:mapping}
    Let $H\subset \R^p$ and let $f:H\to\R^q$, where $p\ge q$. If $f$ is continuously differentiable at the point $a\in \text{int} \;H$ and the linear mapping $f'(a):\R^p\to\R^q$ is surjective, then the range of $f$ contains a neighborhood of $f(a)$.
\end{lemma}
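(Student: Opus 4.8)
The plan is to prove Lemma~\ref{lemma:mapping} in two stages. First I would reduce the rectangular case $p\ge q$ to the square case $p=q$ by restricting $f$ to a suitably chosen $q$-dimensional coordinate slice through $a$ on which the differential is invertible. Then I would treat the square case by solving $f(x)=y$ for every target $y$ near $f(a)$ via a fixed-point argument. The delicate point is that $f$ is differentiable only at the single point $a$, so the classical inverse function theorem — which requires control of the derivative on a whole neighborhood — cannot be invoked directly; the resolution is to replace the contraction step of that theorem by Brouwer's fixed-point theorem, which needs only continuity.

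\textbf{Reduction to the square case.} Since $f'(a)\colon\R^p\to\R^q$ is surjective, the $q\times p$ Jacobian has rank $q$, so it contains $q$ linearly independent columns indexed by some $S\subseteq\{1,\dots,p\}$ with $\abs{S}=q$. Define $g\colon\R^q\to\R^q$ to be the restriction of $f$ to the affine slice through $a$ spanned by these coordinate directions, namely $g(t):=f\prt{a+\sum_{i\in S}t_i e_i}$ for $t\in\R^q$ near $0$; this is well defined because $a\in\mathrm{int}\,H$. Then $g(0)=f(a)$, the map $g$ is differentiable at $0$, and $g'(0)$ is exactly the invertible $q\times q$ submatrix of $f'(a)$ with columns $S$. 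As $\mathrm{range}(g)\subseteq\mathrm{range}(f)$, it suffices to show that $\mathrm{range}(g)$ contains a neighborhood of $g(0)=f(a)$.

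\textbf{The square case.} Translate and post-compose with a linear isomorphism so that, without loss of generality, $a=0$, $g(0)=0$ and $g'(0)=I$; replacing $g$ by $g'(0)^{-1}\circ(g-g(0))$ transforms the range only by an affine homeomorphism, leaving the conclusion equivalent. Differentiability at $0$ gives $g(x)=x+r(x)$ with $\norm{r(x)}=o(\norm{x})$, so there is $\delta>0$ with $\norm{r(x)}\le\tfrac12\norm{x}$ whenever $\norm{x}\le\delta$. The hypothesis also makes $g$, hence $r$, continuous on a neighborhood of $0$. Fix a target $y$ with $\norm{y}\le\delta/2$ and define the continuous map $T_y(x):=y-r(x)$ on the closed ball $\overline{B}(0,2\norm{y})\subseteq\overline{B}(0,\delta)$. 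For $\norm{x}\le 2\norm{y}$ one has $\norm{T_y(x)}\le\norm{y}+\tfrac12\norm{x}\le 2\norm{y}$, so $T_y$ maps this ball into itself. Brouwer's fixed-point theorem then yields $x^\ast$ with $x^\ast=T_y(x^\ast)=y-r(x^\ast)$, i.e. $g(x^\ast)=x^\ast+r(x^\ast)=y$. Thus every $y$ with $\norm{y}\le\delta/2$ lies in $\mathrm{range}(g)$, and therefore $\mathrm{range}(f)$ contains a neighborhood of $f(a)$.

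\textbf{Main obstacle.} The essential difficulty is the weakness of the regularity hypothesis: differentiability at a single point provides no uniform Lipschitz control on the remainder $r$ away from $0$, so the contraction-mapping proof of the inverse function theorem is unavailable and $T_y$ need not be a contraction. Brouwer's theorem circumvents this, using only the continuity of $T_y$ — which is precisely where the continuity of $f$ near $a$ furnished by the hypothesis is indispensable — and it delivers exactly surjectivity onto a neighborhood (no local inverse, which is all the lemma claims). If instead one reads the hypothesis as full $C^1$ regularity on a neighborhood of $a$, the square case follows at once from the standard inverse function theorem, and the reduction step above is unchanged.
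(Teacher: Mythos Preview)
The paper does not supply a proof of this lemma: it is quoted as Theorem~2.35 of \cite{laczkovich2017real} and invoked as a black box inside the proof of Theorem~\ref{thm:DAG}. There is therefore no in-paper argument to compare your proposal against.

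That said, your argument is correct. The reduction to the square case by restricting $f$ to a $q$-dimensional coordinate slice through $a$ is standard and sound, and in the square case your Brouwer fixed-point argument is clean: the estimate $\norm{r(x)}\le\tfrac12\norm{x}$ on $\overline B(0,\delta)$ gives $T_y\colon \overline B(0,2\norm{y})\to\overline B(0,2\norm{y})$ for $\norm{y}\le\delta/2$, and a fixed point of $T_y$ solves $g(x^\ast)=y$. Your remark about the regularity hypothesis is also apt: under the weakest reading (differentiable at $a$, merely continuous nearby) the contraction-mapping route is unavailable and Brouwer is exactly the right tool, while under the usual reading of ``continuously differentiable at $a$'' as $C^1$ on a neighborhood the classical inverse function theorem already finishes the square case after your reduction step. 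Either way the conclusion---surjectivity onto a neighborhood of $f(a)$, with no claim of a local inverse---is precisely what the lemma asserts and what the paper needs.
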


\begin{myproof}{Theorem~\ref{thm:DAG}}
We will prove it by induction.
Let us consider an arbitrary DAG and an arbitrary topological ordering of the nodes.
According to Proposition~\ref{prop:full_measurement_equivalence}, the DAG is identifiable if (and only if) it is identifiable with $\NN^m=V$.
Let us consider an arbitrary node $i$. Without loss of generality, let us denote the set of in-neighbors of the node $i$ as $\NN_i=\{1,\ldots,m\}$. If we set to zero the possible excitation signal $u_i$ of the node $i$, the measurement of $i$ is given by:
\begin{align*}
    y_i^k&=\sum_{j=1}^m f_{i,j}(\phi_{i,j}(u_1^{k-T_{i,1}},\ldots,u_{n_i}^{k-T_{i,n_i}}))\\
    &=F_i(\phi_{i,1},\ldots,\phi_{i,m}),
\end{align*}
    where we use the same notation as in \eqref{eq:function_Fi_2}. 
    Let us assume that there exists a set $\{ f \}=\{ \tilde f \}$ such that $F(\NN^m)=\tilde F(\NN^m)$. Since $F_{i}\in F(\NN^m)$, we have the identifiability problem:
\begin{equation}\label{eq:identifiability_problem_zero}
    F_i(\phi_{i,1},\ldots,\phi_{i,m})=\tilde F_i(\tilde\phi_{i,1},\ldots,\tilde\phi_{i,m}).
\end{equation}
    Let us assume that all the incoming edges of the nodes at the left of the node $i$ in the topological ordering have been identified. 
Then, the identifiability problem \eqref{eq:identifiability_problem_zero} becomes:
\begin{equation}\label{eq:identifiability_problem}
    F_i(\phi_{i,1},\ldots,\phi_{i,m})=\tilde F_i(\phi_{i,1},\ldots,\phi_{i,m}).
\end{equation}
Now, let us consider the mapping: 
\vspace{-2mm}
$$\Phi_i:\R^{n_i}\to\R^m$$
\small
\vspace{-5mm}
$$
\Phi_i(w_1,\ldots,w_{n_i})\!=\!(\phi_{i,1}(w_1,\ldots,w_{n_i}),\ldots,\phi_{i,m}(w_1,\ldots,w_{n_i})),
\vspace{-1mm}
$$
\normalsize
and the Jacobian matrix of $\Phi_i$ denoted by $J_{\Phi_i}(\ww)$ where $\ww=(w_1,\ldots,w_{n_i})$.
Notice that $J_{\Phi_i}(\ww)$ corresponds to the submatrix of the nonlinear network matrix $T_G(\vv)$ where the rows are the in-neighbors of $i$ and the columns are the nodes with the excitation signals. Under Assumption~\ref{ass:nogamma}, there exists a point $\vv^*$ such that $T_G(\vv^*)$ has maximal rank, which also determines a point $\ww^*$ for $J_{\Phi_i}(\ww^*)$.
According to \cite[Proposition~V.1]{hendrickx2019identifiability}, since there are vertex-disjoint paths from excited nodes to the in-neighbors of $i$, the generic rank of $J_{\Phi_i}(\ww^*)$ is $m$, which implies that $J_{\Phi_i}(\ww^*)$ is surjective.
By virtue of Lemma~\ref{lemma:mapping}, the range of $\Phi_i$ must contain a neighborhood of $\Phi_i(\ww^*)$, which implies that \eqref{eq:identifiability_problem} holds in a set of positive measure. Then, by the Identifiy Theorem of analytic functions \cite{krantz2002primer}, we guarantee that 
\vspace{-1mm}
\begin{equation}\label{eq:result_theorem}
F_i(z_1,\ldots,z_m)=\tilde F_i(z_1,\ldots,z_m) \;\text{ for all } z_1,\ldots,z_m\in\R,    
\end{equation}
since the range of all the functions $\phi_{i,j}$ is $\R$. From \eqref{eq:result_theorem}, it follows that
\vspace{-2mm}
$$
f_{i,j}=\tilde f_{i,j} \quad \text{for all } j=1\ldots,m,
\vspace{-1mm}
$$
so that all the incoming edges of $i$ can be identified.

\noindent Now, notice that for $i=2$ in the topological ordering, if there is an edge $f_{2,1}$, the function $\phi_{2,1}$ is the identity function and the edge $f_{2,1}$ can be clearly identified. Then, by induction, the identifiablity analysis is valid for any node in the DAG. Thus, all the DAG is identifiable.
\end{myproof}

Theorem~\ref{thm:DAG} implies that a DAG satisfies symmetry full excitation/full measurement if there are vertex-disjoint paths from the sources to the in-neighbors of each node of the network.
This is also consistent with the case of trees since from all the sources we have vertex disjoint paths that reach the in-neighbors of all the nodes of a tree.

When the functions are linear, the space of functions that do not satisfy Assumption~\ref{ass:nogamma} has a dimension smaller than the dimension of the system (i.e., set of measure zero) \cite{hendrickx2019identifiability}. In the case of nonlinear functions, to the best of our knowledge, the dimension of the space of functions that do not satisfy Assumption~\ref{ass:nogamma} is still not known and its full characterization is left for future work.

Theorem~\ref{thm:DAG} provides sufficient conditions for the identifiability of DAGs that are weaker than in the linear case. 
For instance, let us consider the DAG in Fig.~\ref{fig:DAG_unidentifiable_linear}. In the linear case, the DAG requires $N^{m,e}>n$ since the node 1 is a dource and a dink \cite{mapurunga2022excitation}, and must be measured and excited. However, in the nonlinear case, we only need $N^{m,e}=n$ to guarantee identifiability. The node 3 has a vertex-disjoint path from the node 1 and the node 4 has a vertex-disjoint path from 2, so that the DAG satisfies the conditions of Theorem~\ref{thm:DAG} and it is identifiable with this identification pattern.
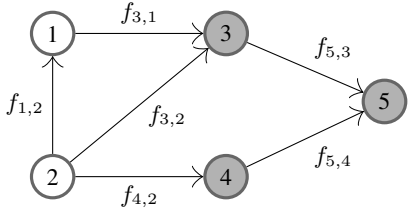
\begin{figure}[!t]
    \centering

\vspace{2mm}
    
    \begin{tikzpicture}
    [
roundnodes/.style={circle, draw=black!60, fill=black!5, very thick, minimum size=1mm},roundnode/.style={circle, draw=white!60, fill=white!5, very thick, minimum size=1mm},roundnodes2/.style={circle, draw=black!60, fill=black!30, very thick, minimum size=1mm},roundnodes3/.style={circle, draw=black!60, fill=white!30, very thick, minimum size=1mm}
]

\small

\node[roundnodes3](node1){1};
\node[roundnodes3](node5)[below=of node1,yshift=-0.3cm,xshift=0cm]{2};
\node[roundnodes2](node2)[right=of node1,yshift=0mm,xshift=0.7cm]{3};
\node[roundnodes2](node3)[right=of node5,yshift=0mm,xshift=0.7cm]{4};
\node[roundnodes2](node4)[right=of node3,yshift=1cm,xshift=0.5cm]{5};

\draw[-{Classical TikZ Rightarrow[length=1.5mm]}] (node1) to node [above,swap,yshift=0mm] {$f_{3,1}$} (node2);
\draw[-{Classical TikZ Rightarrow[length=1.5mm]}] (node2) to node [right,swap,yshift=2.5mm] {$f_{5,3}$} (node4);
\draw[-{Classical TikZ Rightarrow[length=1.5mm]}] (node5) to node [left,swap,xshift=0cm,yshift=0mm] {$f_{1,2}$} (node1);
\draw[-{Classical TikZ Rightarrow[length=1.5mm]}] (node3) to node [right,swap,yshift=-2.5mm] {$f_{5,4}$} (node4);
\draw[-{Classical TikZ Rightarrow[length=1.5mm]}] (node5) to node [right,swap,xshift=0.00cm,yshift=-1.5mm] {$f_{3,2}$} (node2);
\draw[-{Classical TikZ Rightarrow[length=1.5mm]}] (node5) to node [below,swap,yshift=0mm] {$f_{4,2}$} (node3);

\normalsize

\end{tikzpicture}
\vspace{-3mm}
    \caption{A DAG with an identification pattern that is unidentifiable in the linear case but it is identifiable in the nonlinear case.
    }
    \vspace{-4mm}
    \label{fig:DAG_unidentifiable_linear}
\end{figure}

\section{Conclusions}

We analyzed the identifiability of DAGs with partial excitation and measurement. We showed that in the nonlinear case, a DAG is identifiable with an identification pattern if and only if it is identifiable with the full measurement of the nodes. For trees, we showed that the symmetry full excitation/full measurement holds and that any identification pattern guarantees identifiability.
Unlike the linear case, in the case of more general DAGs, we showed that symmetry does not hold. Finally, we introduced the notion of a generic nonlinear network matrix, and we derived identifiability conditions based on vertex-disjoint paths from excited nodes to the in-neighbors of each node in the 
network.

For future work, it would be important to characterize the space of functions that do not satisfy Assumption~\ref{ass:nogamma}.
Also, it would be interesting to analyze the case when the range of the functions is not all $\R$. Finally, a generalization of the analysis to more general digraphs where cycles might exist ($F_i$ depends on an infinite number of inputs), and more general models where the nonlinear function can depend on inputs with several \linebreak delays \cite{vizuete2024nonlinear} is definitely an interesting area of research.

\bibliographystyle{IEEEtran}
\bibliography{arXiv}

\end{document}